\documentclass[11pt]{amsproc}%
\usepackage{amssymb}
\usepackage{amsfonts}
\usepackage{amsmath}
\usepackage{graphicx}%
\setcounter{MaxMatrixCols}{30}
\providecommand{\U}[1]{\protect\rule{.1in}{.1in}}
\theoremstyle{plain}

\newtheorem{lemma}{Lemma}

\newtheorem{theorem}{Theorem}
\numberwithin{equation}{section}
\begin{document}
\title[Approximation by Lipschitz, analytic maps]{Approximation by Lipschitz, analytic maps on certain Banach spaces}
\author{R. Fry}
\address{Department of Mathematics and Statistics\\
Thompson Rivers University\\
Kamloops, BC\\
Canada}
\email{rfry@tru.ca}
\urladdr{}
\author{L. Keener}
\curraddr{Department of Mathematics\\
University of Northern British Columbia\\
Prince George, BC\\
Canada}
\email{keener@unbc.ca}
\urladdr{}
\thanks{}
\subjclass{46B20}
\keywords{Analytic approximation, Lipschitz map, Banach space}
\dedicatory{ }
\begin{abstract}
We show that on separable Banach spaces admitting a separating polynomial, any
uniformly continuous, bounded, real-valued function can be uniformly
approximated by Lipschitz, analytic maps on bounded sets.

\end{abstract}
\maketitle

\section{Introduction}

\medskip

\subsection{History of the Problem}

Investigating the uniform approximation of continuous functions by smooth
functions has a long history (`function' in this paper shall mean real-valued
function). For continuous functions $f$ defined on closed intervals in
$\mathbb{R}$, Weierstrass's well known classical theorem asserts that $f$ can
be uniformly approximated by polynomials. For continuous functions $f$ on open
subsets of $\mathbb{R}^{n},$ H. Whitney showed in the famous work \cite{W}
that $f$ can be uniformly approximated by (real) analytic maps.

For infinite dimensional (real) Banach spaces, the situation is far more
difficult. In fact, it was proven in \cite{NS} that even on the closed unit
ball of separable Hilbert space, $C^{\infty}$-smooth functions cannot
generally be approximated by polynomials (see below for the relevant
definitions). Nevertheless, for certain Banach spaces Whitney's result can be
extended to infinite dimensions. A classical and often cited theorem of
Kurzweil \cite{K} states in particular that if $X$ is a separable Banach space
admitting a separating polynomial, then any continuous function on $X$ can be
uniformly approximated by analytic maps on $X.$ Examples of such $X$ include
$l_{p}$ or $L_{p},$ with $p$ an even integer.

Most subsequent work concerning the uniform approximation of continuous
functions on Banach spaces focused on approximation by (merely) $C^{p}$-smooth
functions rather than analytic maps (see e.g., \cite{DGZ}, \cite{FHHMPZ}), the
primary reason for this being the ability to employ $C^{p}$-smooth partitions
of unity in this context, which cannot be used in the analytic case. As a
consequence, the number of results on analytic approximation in this area are few.

However, over the last ten years two important papers have appeared. In
\cite{DFH1} it is shown that in $l_{p}$ or $L_{p},$ with $p$ an even integer,
that any equivalent norm can be uniformly approximated on bounded sets by
analytic norms. In \cite{DFH2} it is proven for example, that in $X=c_{0}$ or
$X=C\left(  K\right)  ,$ with $K$ a countable compact, that any equivalent
norm can be uniformly approximated by analytic norms on $X\backslash\left\{
0\right\}  .$

An independent line of investigation concerning approximation by Lipschitz,
$C^{p}$-smooth functions was recently undertaken in a series of papers
including \cite{F1}, \cite{F2}, \cite{AFM}, and \cite{F3}. For example, in
\cite{AFM} it is shown in particular that on separable Banach spaces admitting
Lipschitz, $C^{p}$-smooth bump functions, that any bounded, uniformly
continuous function can be uniformly approximated by Lipschitz, $C^{p}$-smooth
functions. This result was then generalized to weakly compactly generated
Banach spaces in \cite{F3}. In terms of smoothness class ($C^{p},C^{\omega},$
etc.), the present paper in some sense completes this programme. These results
have found applications in the area of deleting diffeomorphisms on Banach
spaces \cite{AM}, and in variational principles on Riemannian manifolds
\cite{AFL}. For superreflexive spaces, we note that the approximation of
Lipschitz functions by Lipschitz, $C^{1}$-smooth maps can be accomplished via
convolution techniques (see, e.g., \cite{C}, \cite{LL}).

The motivation for this article was to see if the recent results on Lipschitz,
smooth approximation could be achieved in the context of analytic
approximation in the spirit of Kurzweil's work mentioned above. We give a
positive solution to this problem for bounded and uniformly continuous
functions on bounded sets in separable spaces admitting a separating
polynomial. We remark that the uniform continuity is a necessary condition
here. To our knowledge, this is the only result on approximation (of general
functions as opposed to norms) by Lipschitz, analytic functions on infinite
dimensional spaces.

\medskip

Specifically, we establish,

\begin{theorem}
\label{Maintheorem}Let $X=\left(  X,\left\Vert \cdot\right\Vert _{X}\right)  $
be a separable, real Banach space that admits a separating polynomial, let
$G\subset X$ be a bounded open set, and let $F:G\rightarrow\mathbb{R}$ be
bounded and uniformly continuous. Then for each $\varepsilon>0$ there is a
real analytic function $K:G\rightarrow\mathbb{R}$ which is Lipschitz on $G$
and such that $\left\vert F(x)-K(x)\right\vert <\varepsilon$ for all $x\in G$.
\end{theorem}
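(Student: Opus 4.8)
The plan is to build $K$ as a (complex-uniformly convergent) series of analytic ``bumps'' manufactured from the separating polynomial, arranged so that the series of Fr\'echet derivatives converges at a rate that is independent of dimension. As a first step I would reduce to a convenient setting: by the standard extension theorem for uniformly continuous functions, extend $F$ to a bounded, uniformly continuous function on all of $X$ with the same bound, and fix an open ball $B=B(0,R)\supset G$; since $B$ is convex and $G\subset B$, it suffices to produce a real-analytic $K$ on $X$ that is Lipschitz on $B$ and satisfies $|F-K|<\varepsilon$ on $B$. Let $\omega$ be a modulus of continuity for $F$. Next, normalize the separating polynomial: one may choose a continuous polynomial $P:X\to\mathbb{R}$ with $P\ge 0$, $P(0)=0$, and $P(x)\ge 1$ whenever $\|x\|\ge 1$; recall that $P$ and its derivative $DP$ are bounded on bounded sets. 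For $a\in X$ and $t>0$ set $b_{a,t}(x):=\exp\!\bigl(-t\,P(x-a)\bigr)$. Because $P(\cdot-a)$ is a continuous polynomial and $s\mapsto e^{-ts}$ is entire, each $b_{a,t}$ is real-analytic on $X$; moreover $0<b_{a,t}\le 1$, $b_{a,t}(a)=1$, $b_{a,t}(x)\le e^{-t}$ as soon as $\|x-a\|\ge 1$, and $\|Db_{a,t}(x)\|\le t\,\|DP(x-a)\|\,b_{a,t}(x)$, which on $B$ is bounded and decays rapidly as $\|x-a\|$ grows.

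The approximant will have the form $K(x)=\sum_i F(a_i)\,\psi_i(x)$, where $\psi_i=c_i\,b_{a_i,t_i}$ for scalars $c_i\in[0,1]$, parameters $t_i>0$, and a net $\{a_i\}_i\subset X$ for $B$ that must be chosen with care (see below). Writing $\Psi:=\sum_i\psi_i$, the properties I would aim for, relative to a small $r\in(0,1)$ and a small $\eta>0$, are: (i) $\sum_i\psi_i$ and $\sum_i\|D\psi_i\|$ converge uniformly on $B$, and the corresponding complexified series converges locally uniformly on a neighbourhood of $B$ in $X_{\mathbb{C}}$; (ii) $1-\eta\le\Psi(x)\le 1$ for all $x\in B$; (iii) localization: $\sum_{\{i:\ \|x-a_i\|\ge r\}}\psi_i(x)\le\eta$ for all $x\in B$; and (iv) the dimension-free derivative bound $\sum_i\|D\psi_i(x)\|\le C/r$ for all $x\in B$, where $C$ depends only on $R$ and on $P$.

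Granting (i)--(iv) the proof is quickly finished. By (i) and the Banach-space version of Weierstrass's theorem (a locally uniform limit of holomorphic maps is holomorphic), $K$ is real-analytic on $X$, using $|F(a_i)|\le\|F\|_\infty$ and $|\psi_i|\le c_i$ to dominate the series on the complex neighbourhood. For $x\in B$, writing $F(a_i)=F(x)+\bigl(F(a_i)-F(x)\bigr)$ and splitting at $\|x-a_i\|=r$ gives, by (iii),
\[
\bigl|K(x)-F(x)\Psi(x)\bigr|\ \le\ \omega(r)\sum_{\{i:\ \|x-a_i\|<r\}}\psi_i(x)\ +\ 2\|F\|_\infty\sum_{\{i:\ \|x-a_i\|\ge r\}}\psi_i(x)\ \le\ \omega(r)+2\|F\|_\infty\,\eta ,
\]
and by (ii), $\bigl|F(x)\Psi(x)-F(x)\bigr|\le\|F\|_\infty\,\eta$; choosing first $r$ so that $\omega(r)<\varepsilon/2$ and then $\eta$ small enough yields $|F-K|<\varepsilon$ on $B\supset G$. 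By (i) one may differentiate term by term, so $\|DK(x)\|\le\|F\|_\infty\sum_i\|D\psi_i(x)\|\le\|F\|_\infty C/r$ on $B$ by (iv); since $r$ is now a fixed number and $B$ is convex, $K$ is Lipschitz on $B$, hence on $G$.

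The whole difficulty is concentrated in constructing the net $\{a_i\}$ and the parameters $c_i,t_i$ so that (i)--(iv) hold, and this is exactly where the finite-dimensional picture misleads: one cannot take $\{a_i\}$ to be a maximal $r$-separated set, because in an infinite-dimensional space a single point of $B$ lies within distance $r$ of infinitely many points of such a net, so nothing forces $\Psi$ or $\sum_i\|D\psi_i\|$ to be finite, let alone to satisfy (ii) and (iv) with constants free of dimension. The idea is to exploit the genuine exponential decay of the $b_{a,t}$ provided by $P$ together with the separability of $X$: build the net in countably many ``layers'' adapted to an increasing sequence of finite-dimensional subspaces with dense union (equivalently, pass to a $\sigma$-uniformly discrete refinement of a cover of $B$ by balls of radius $r$), and tune the $t_i$ (large enough to localize at scale $r$) together with the multipliers $c_i$ so that the contribution of each layer to $\Psi$ and to $\sum_i\|D\psi_i\|$, at any point of $B$, is dominated by a geometrically summable quantity coming from the decay factors, while the total mass stays in $[1-\eta,1]$. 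Carrying out this balancing act --- maintaining (ii) and the dimension-free bound (iv) simultaneously, and checking that the estimates survive complexification so that $K$ is genuinely analytic rather than merely smooth --- is the technical core of the argument, and I expect it to be the main obstacle.
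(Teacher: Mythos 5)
Your plan is a genuinely different route from the paper: you try to build an \emph{additive} analytic approximate partition of unity $\Psi=\sum_i\psi_i$ with $\Psi$ pinched near $1$, $\sum_i\|D\psi_i\|$ uniformly bounded, and localization at scale $r$, and then set $K=\sum_iF(a_i)\psi_i$. The algebra that follows from (i)--(iv) is fine, and the preliminary reductions (extend $F$, fix $B\supset G$) are unobjectionable. But everything hinges on producing $\{a_i\},\{c_i\},\{t_i\}$ satisfying (i)--(iv), and you correctly flag this as the real issue; the problem is that the ``layers'' scheme you sketch does not resolve it, and the obstruction is structural, not merely technical.

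Here is the difficulty concretely. Because each $\psi_i=c_ie^{-t_iP(\cdot-a_i)}$ is analytic, it is strictly positive on all of $B$; none of the bumps have compact support. In an infinite-dimensional ball any $r$-net must be infinite, and even an $r$-separated family can have \emph{all} of its members within distance $r$ of a single point (e.g.\ $\{re_i/\sqrt2\}$ in $\ell_2$). So the number of ``near'' indices at a given $x\in B$ is not controllable by a volume count, and (iv) cannot be made dimension-free by separation alone. If you organize the net in finite-dimensional layers $X_n$ with mesh $r_n\to0$ and weights $c_n$, convergence of $\Psi$ on $B$ forces $\sum_n N_nc_n<\infty$ with $N_n$ the layer size, which makes $c_n$ decay super-exponentially; but then for a generic $x\in B$ (not eventually in any $X_n$) the only layers containing points within distance $r$ of $x$ are those with $n$ past some threshold $N(x)$, and their total contribution is at most of order $\sum_{n\ge N(x)}c_n\cdot(\text{bounded count})$, which is far below $1-\eta$ when $N(x)$ is large. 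Conversely, keeping $c_n$ bounded away from $0$ to rescue (ii) destroys (i) and (iv). This is precisely the failure of analytic partitions of unity the paper's introduction alludes to; an additive construction of this kind is not known to work, and your sketch leaves it exactly where the hard part is.

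The paper sidesteps this by replacing the \emph{sum} with a \emph{sup-like} analytic functional. It constructs analytic functions $u_j$ on $G$ so that at each $x$ some $u_{j_0}(x)\ge 4/5$, while for $j$ large $u_j$ is small (with the estimates persisting on a complex neighbourhood), and the collection $\{u_j\}$ is uniformly Lipschitz; it then feeds the sequence $\{u_j(x)\}$ (and $\{F(x_j)u_j(x)\}$) into the analytic Minkowski functional $\lambda$ of the Preiss ball in $l_\infty$, and sets $K(x)=\lambda(\{F(x_j)u_j(x)\})/\lambda(\{u_j(x)\})$. Because $\lambda$ is equivalent to the sup norm, $1$-Lipschitz on the relevant convex range, and analytic, one needs only a \emph{single} large coordinate and a controlled tail --- not summability or a pinched sum --- which is achievable; and the $u_j$ themselves are built from Kurzweil-type Gaussian integrals against the separating polynomial rather than bare exponentials. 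That substitution of $\lambda$ for $\sum$ is the missing idea in your plan, and without it or some equivalent device, properties (ii) and (iv) cannot be secured simultaneously.
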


\subsection{Basic Definitions}

Our notation is standard, with $X$ denoting a Banach space, and an open ball
with centre $x$ and radius $r$ denoted $B_{r}\left(  x\right)  ,$ and the
boundary of the unit ball in the space $X$ is denoted by $S_{X}$. Function
shall always mean real-valued function. If $\left\{  f_{j}\right\}  _{j}$ is a
sequence of Lipschitz functions on $X,$ then we will at times say this family
is \textit{uniform Lipschitz (UL)} if there is a common Lipschitz constant for
all $j.$ A \textit{homogeneous polynomial of degree }$n$ is a map,\textit{\ }%
$P:X\rightarrow\mathbb{R},$ of the form $P\left(  x\right)  =A\left(
x,x,...,x\right)  ,$ where $A:X^{n}\rightarrow\mathbb{R}$ is $n-$multilinear
and continuous. For $n=0$ we take $P$ to be constant. A \textit{polynomial of
degree }$n$ is a sum $\sum_{i=0}^{n}P_{i}\left(  x\right)  ,$ where the
$P_{i}$ are $i$-homogeneous polynomials.

Let $X$ be a Banach space, and $G\subset X$ an open subset. A function
$f:G\rightarrow\mathbb{R}$ is called \textit{analytic }if for every $x\in G,$
there are a neighbourhood $N_{x},$ and homogeneous polynomials $P_{n}%
^{x}:X\rightarrow\mathbb{R}$ of degree $n$, such that
\[
f\left(  x+h\right)  =\sum_{n\geq0}P_{n}^{x}\left(  h\right)
\;\text{provided\ }x+h\in N_{x}.
\]

Further information on polynomials may be found, for example, in $\left[
\text{SS}\right]  .$

For a Banach space $X,$ we define its \textit{complexification }$\widetilde
{X}$ in the standard way. That is, $\widetilde{X}=X\bigoplus iX$ with norm
\[
\left\|  z\right\|  _{\widetilde{X}}=\left\|  x+iy\right\|  _{\widetilde{X}%
}=\sup_{0\leq\theta\leq2\pi}\left\|  \cos\theta\ x-\sin\theta\ y\right\|
_{X}.
\]
If $q\left(  x\right)  $ is a polynomial on $X,$ there is a natural extension
of $q\left(  x\right)  $ to a polynomial $\widetilde{q}\left(  z\right)
=\widetilde{q}\left(  x+iy\right)  $ on $\widetilde{X}$ where for $y=0$ we
have $\widetilde{q}=q.$ For more information on complexification (and
polynomials) we recommend \cite{MST}.

We define $\widetilde{c}_{0}=\left\{  \left\{  z_{j}\right\}  :z_{j}%
\in\mathbb{C},\ \left|  z_{j}\right|  \rightarrow0\right\}  ,$ with norm
$\left\|  z\right\|  _{\widetilde{c}_{0}}=\left\|  \left\{  z_{j}\right\}
\right\|  _{\widetilde{c}_{0}}=\max_{j}\left\{  \left|  z_{j}\right|
\right\}  ,$ and a similar definition for $\widetilde{l}_{\infty}.$ In the
sequel, all extensions of functions from $X$ to $\widetilde{X},$ as well as
subsets of $\widetilde{X},$ will be embellished with a tilde.

\medskip

The proof of Theorem 1 is broken up into several sections and lemmas which we
now present.

\section{Preliminary Results}

\subsection{An extension of the Preiss norm}

As developed in \cite{FPWZ}, there is a an analytic norm on $c_{0}$ (hereafter
referred to as the Preiss norm) that is equivalent to the canonical supremum
norm. Let us recall the construction. We may define this equivalent norm
$\left\Vert \cdot\right\Vert $ as follows. Let $C:c_{0}\rightarrow\mathbb{R}$
be given by $C(\{x_{n}\})=\sum_{n=1}^{\infty}\left(  x_{n}\right)  ^{2n}$. Let
$W=\{x\in c_{0}:C(x)\leq1\}.$ Then $\left\Vert \cdot\right\Vert $ is the
Minkowski functional of $W$; that is, $\left\Vert x\right\Vert $ is the
solution for $\lambda$ to $C\left(  \lambda^{-1}x\right)  =1.$ The Preiss norm
is analytic at all non-zero points in $c_{0}.$ To see this, let us define the
function $\widetilde{C}:V\rightarrow\mathbb{C}$ by $\widetilde{C}\left(
\left\{  z_{n}\right\}  \right)  =\sum_{n=1}^{\infty}\left(  z_{n}\right)
^{2n}$ where $V$ is the subset of $\widetilde{l}_{\infty}$ for which the
series converges. Then $\widetilde{C}$ is analytic at each $z\in\widetilde
{c}_{0}$. Indeed, the partial sums are analytic as a consequence of the
analyticity of the projection functions $p_{j}(\{z_{i}\})=z_{j}$, whose local
differentiability is easily shown by a direct calculation. Since the series in
the definition of $\widetilde{C}$ converges locally uniformly at each
$z\in\widetilde{c}_{0}$ the analyticity of $\widetilde{C}$ on $\widetilde
{c}_{0}$ follows. Also, for $z\in\widetilde{c}_{0}$ sufficiently close to
$c_{0}$ we have for $\lambda\in\mathbb{C}\backslash\left\{  0\right\}  ,$
$\frac{\partial\widetilde{C}\left(  \lambda^{-1}z\right)  }{\partial\lambda
}\neq0,$ hence one can apply the complex Implicit Function Theorem (see e.g.,
\cite{D} page 265, where the real result for Banach spaces is easily extended
to the analytic case) to $F\left(  z,\lambda\right)  =\widetilde{C}\left(
\lambda^{-1}z\right)  -1$ to obtain a unique analytic solution $\lambda\left(
z\right)  $ to $F\left(  z,\lambda\right)  =0,$ with $\lambda\mid_{c_{0}%
}=\left\Vert \cdot\right\Vert .$ Now if $x=\{x_{n}\}$ satisfies $\left\Vert
x\right\Vert _{c_{0}}=1,$ then $\sum_{n=1}^{\infty}\left(  \left\Vert
x\right\Vert ^{-1}x_{n}\right)  ^{2n}=1$ implies $\left\Vert x\right\Vert
\geq1.$ On the other hand, if $\left\Vert x\right\Vert _{c_{0}}=1/2,$ then
$C\left(  x\right)  \leq\sum_{n=1}^{\infty}(1/2)^{2j}<1,$ implying $\left\Vert
x\right\Vert <1$. Hence, $\left(  1/2\right)  \left\Vert x\right\Vert
\leq\left\Vert x\right\Vert _{c_{0}}\leq\left\Vert x\right\Vert $ for all $x$
in $c_{0}$.We shall use the above notation throughout this article. We now
extend the Preiss norm to an analytic function on a open set containing
$c_{0}$ in $l_{\infty}$ as follows. First define the set%

\[
U=\left\{
\begin{array}
[c]{c}%
\left\{  x_{j}\right\}  \in l_{\infty}:\text{there exists }j_{0}\text{ and
}0<a<3/4\ \\
\text{ \ \ \ \ \ \ \ with \ }j>j_{0}\Rightarrow\left|  x_{j}\right|  <a
\end{array}
\right\}  .
\]
Then $U$ is convex and open in $l_{\infty}$ and $c_{0}\subset U\subset
l_{\infty}.$

\begin{lemma}
There is an analytic function $\lambda$ defined on $U$ such that for $x\in
c_{0}\subset U,$ we have $\lambda\left(  x\right)  =\left\Vert x\right\Vert ,$
the latter being the Preiss norm.
\end{lemma}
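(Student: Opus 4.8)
The plan is to realise $\lambda$ as the solution of the equation $\widetilde{C}(\lambda^{-1}z)=1$ and to obtain its analyticity from the analytic implicit function theorem, extending to $U$ the argument already sketched above for a neighbourhood of $\widetilde{c}_0$. First I would pass to the complexification: let $\widetilde{U}\subset\widetilde{l}_\infty$ consist of those $\{z_j\}$ for which there exist $j_0$ and $0<a<3/4$ with $|z_j|<a$ for $j>j_0$. As in the real case, $\widetilde{U}$ is open and convex, $\widetilde{c}_0\subset\widetilde{U}$, and the series defining $\widetilde{C}(\{z_j\})=\sum_j z_j^{2j}$ converges on $\widetilde{U}$, so $\widetilde{U}\subset V$.

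The second step is that $\widetilde{C}$ is analytic on $\widetilde{U}$. Near a point $z^0\in\widetilde{U}$ with parameters $j_0,a$, fix $a<a'<3/4$ and a ball $B$ about $z^0$ on which every point satisfies $|z_j|<a'$ for all $j>j_0$; on $B$ one has $\widetilde{C}(z)=q(z)+\sum_{j>j_0}z_j^{2j}$, where $q$ is a polynomial in the first $j_0$ coordinate functionals and the tail is dominated termwise by the convergent geometric series $\sum_{j>j_0}(a')^{2j}$, hence converges uniformly on $B$. Thus $\widetilde{C}$ is a locally uniform limit of polynomials and so is analytic on $\widetilde{U}$; the same argument shows it is analytic on the open set $\{z:\limsup_j|z_j|<1\}$, which I would also want at hand. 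The constant $3/4$ in the definition of $U$ is present precisely to make this geometric domination possible.

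Now fix $x\in U$. For $x\in c_0$ the relevant value is the Preiss norm; in general one must produce $\lambda_0>0$ with $\widetilde{C}(\lambda_0^{-1}x)=1$. Granting this, $\lambda_0>\limsup_j|x_j|$, so $\lambda_0^{-1}x$ lies in the set $\{\limsup_j|z_j|<1\}$, and $\frac{\partial}{\partial\lambda}\widetilde{C}(\lambda^{-1}z)\big|_{(x,\lambda_0)}=-\lambda_0^{-1}\sum_j 2j\,(x_j/\lambda_0)^{2j}$, which for real $x\neq0$ is $\le-2/\lambda_0<0$, hence nonzero. Since $(z,\lambda)\mapsto\widetilde{C}(\lambda^{-1}z)-1$ is jointly analytic wherever $\lambda\neq0$ and $\lambda^{-1}z$ lies in the domain of $\widetilde{C}$, the complex implicit function theorem gives a unique analytic solution $\lambda(\cdot)$ near $x$. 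As $\widetilde{U}$ is convex, hence connected, and the local solutions agree on overlaps by uniqueness, they patch into a single analytic $\lambda$ on $\widetilde{U}$, in particular on $U$. For $x\in c_0\setminus\{0\}$ the equation $C(\lambda^{-1}x)=1$ is exactly the one defining the Minkowski functional of $W$, so $\lambda(x)=\|x\|$, the Preiss norm (the identity being valid for $x\neq0$, the Preiss norm being non-differentiable at the origin).

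The delicate point, and where I would concentrate the effort, is the passage from ``near $c_0$'' to ``all of $U$'': one must check that the equation $\widetilde{C}(\lambda^{-1}z)=1$ admits a non-degenerate solution at every point of $U$ — including points far from $c_0$ with small or vanishing leading coordinates, where the $c_0$-argument yields nothing — and that the branch fixed near $c_0$ continues to all of $U$. This is a geometric estimate on the sublevel sets of $\widetilde{C}$, governed by the cutoff $a<3/4$, rather than a formal computation, and it is the crux of the lemma.
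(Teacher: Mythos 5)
Your argument mirrors the paper's own route: complexify, establish analyticity of $\widetilde{C}$ via locally uniform convergence (the $3/4$ cutoff giving geometric domination of the tail), compute $\partial_\lambda\widetilde{C}(\lambda^{-1}z)\neq 0$ at real $x\neq 0$, invoke the complex implicit function theorem to get a local branch, and patch by uniqueness and connectedness. The paper is in fact no more explicit than you are about the base point $(x_0,\lambda_0)$ with $\widetilde{C}(\lambda_0^{-1}x_0)=1$ required to start the implicit function theorem, and the auxiliary constraint $|\lambda-1|\leq c<1/8$ it imposes only makes sense near the level set $\{C=1\}$. So you are right to isolate solvability of $\widetilde{C}(\lambda^{-1}x)=1$ at every $x\in U\setminus\{0\}$ as the crux, and also right to exclude $x=0$, where no analytic extension of a norm can exist.

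Your instinct is vindicated by a concrete failure: the equation has no positive solution at some points of $U$. Take $x_1=0$ and $x_j=s(1-1/\log j)$ for $j\geq 2$, with any $0<s<3/4$. Then $|x_j|<s$ for all $j$, so $x\in U$, and $\limsup_j|x_j|=s$. The series $\sum_j(x_j/\lambda)^{2j}$ converges only for $\lambda\geq s$ (for $\lambda<s$ one has $|x_{j_k}/\lambda|>1$ along a subsequence $j_k\to\infty$, so the terms fail to tend to zero), and at $\lambda=s$ it equals $\sum_{j\geq 2}(1-1/\log j)^{2j}$, which is about $0.04$: the $j=2$ term is roughly $0.04$ and the $j\geq 3$ terms are bounded by $e^{-2j/\log j}$. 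Since $C(\lambda^{-1}x)$ is strictly decreasing and $<1$ on its entire domain, there is no root. For such $x$ the Minkowski functional of $\mathcal{S}$ is realized at the boundary of $U$, not on $\{C=1\}$, and the implicit function theorem has no foothold; the lemma as stated therefore overreaches. What rescues the main theorem is that $\lambda$ is only ever applied to sequences $\{u_j(x)\}$ having some coordinate $\geq 4/5$ and tail $<1/5$: there the solution exists, lies in $[4/5,\sqrt{2}]$, and the rescaled sequence sits well inside $U$, so the equation and the Minkowski functional agree. A correct formulation would restrict either $U$ or the lemma's conclusion to such sequences.
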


\begin{proof}
Consider the function $C$ from above extended to $U$. By definition of $U,$ if
$x=\left\{  x_{j}\right\}  \in U,$ there is a neighbourhood $\widetilde{N}%
_{0}\subset\widetilde{l}_{\infty}$ of $x$ and a $j_{0}$ such that $y=\left\{
y_{j}\right\}  \in\widetilde{N}_{0}$ and $j>j_{0}$ imply $\left\vert
y_{j}\right\vert <3/4$. Also, it is convenient to introduce a neighbourhood
$\widetilde{N}_{1}\supset$ $\widetilde{N}_{0}$ and a $j_{1}$ such that
$y=\left\{  y_{j}\right\}  \in\widetilde{N}_{1}$ and $j>j_{1}$ imply
$\left\vert y_{j}\right\vert <7/8$. Observe that $\widetilde{C}\left(
\left\{  y_{j}\right\}  \right)  $ converges uniformly on $\widetilde{N}_{1}.$
So $\widetilde{C}\left(  \left\{  z_{j}\right\}  \right)  $ has a complex
derivative on $\widetilde{N}_{1},$ and it follows that $C$ is (real) analytic
on $U$

Now define $F\left(  \left\{  z_{j}\right\}  ,\lambda\right)  =F\left(
z,\lambda\right)  =\widetilde{C}\left(  \lambda^{-1}z\right)  -1$ as above,
here on $\widetilde{N}_{0}\times L,$ where $L\subset\mathbb{C}\backslash
\left\{  0\right\}  $ is such that $\lambda\in L\Rightarrow\left\vert
\lambda-1\right\vert \leq c,$ for some fixed $0<c<1/8.$ Then $F\left(
z,\lambda\right)  $ is (complex) analytic on $\widetilde{N}_{0}\times L$ since
$\lambda^{-1}z$ is in $\widetilde{N}_{1}$, and $\frac{\partial F}%
{\partial\lambda}\left(  z,\lambda\right)  =-\sum_{n=1}^{\infty}2n\left(
\lambda^{-1}z_{n}\right)  ^{2n}\lambda^{-1}.$ Clearly if $0\neq z=x\in
\widetilde{N}_{0}\cap U\subset l_{\infty},$ then this last expression is not
zero, and so by continuity of $\frac{\partial F}{\partial\lambda}\left(
z,\lambda\right)  $, choosing $\widetilde{N}_{0}$ and $c$ smaller if
necessary, we have $\frac{\partial F}{\partial\lambda}\left(  z,\lambda
\right)  \neq0.$ Hence we may apply the complex Implicit Function Theorem to
the equation $F\left(  z,\lambda\right)  =\widetilde{C}\left(  \lambda
^{-1}z\right)  -1$ to obtain a unique complex analytic solution $\lambda
\left(  z\right)  =\lambda\left(  \left\{  z_{j}\right\}  \right)  $ to
$\sum_{n=1}^{\infty}\left(  \lambda^{-1}z_{n}\right)  ^{2n}=1$ on
$\widetilde{N}_{0}\times L\subset\widetilde{l}_{\infty}\times\mathbb{C}%
\backslash\left\{  0\right\}  .$ It follows that $\lambda$ restricts to a
(real) analytic function on $U$. Easily $\lambda\mid_{c_{0}}=\left\Vert
\cdot\right\Vert .$
\end{proof}

\noindent The function $\lambda$ from the lemma, being the solution to
$\sum_{n=1}^{\infty}\left(  \lambda^{-1}x_{n}\right)  ^{2n}=1,$ is the
Minkowski functional of $\mathcal{S}$, where
\[
\mathcal{S}=\left\{  \left\{  x_{j}\right\}  \in U:C\left(  \left\{
x_{j}\right\}  \right)  =\sum_{n=1}^{\infty}\left(  x_{n}\right)  ^{2n}%
\leq1\right\}  \subset U.
\]
Observe that $0\in\ $interior$\left(  \mathcal{S}\right)  .$ Now because the
function $C\left(  \left\{  x_{j}\right\}  \right)  $ is convex on $U,$
$\mathcal{S}$ is convex, and it follows that if $x,y\in U$ with $x+y\in U,$
then $\lambda\left(  x+y\right)  \leq\lambda\left(  x\right)  +\lambda\left(
y\right)  .$ This shows that for such $x,y\in U,$ we have the 1-Lipschitz
property, $\left\vert \lambda\left(  x\right)  -\lambda\left(  y\right)
\right\vert \leq\lambda\left(  x-y\right)  .$ Also, $\mathcal{S}$ is balanced,
and hence for any $x\in U$ and $a\in\mathbb{R}$ with $ax\in U,$ we have in
this case that $\lambda\left(  ax\right)  =\left\vert a\right\vert
\lambda\left(  x\right)  .$ Next, if $x=\left\{  x_{j}\right\}  \in U,$ then
from the argument establishing bounds for the Preiss norm, we also have here
that $\lambda\left(  \left\{  x_{j}\right\}  \right)  \geq\left\Vert
x\right\Vert _{\infty}.$ By the homogeneity property, this last inequality
holds for any $x\in U$ and $a\in\mathbb{R}$ so that $ax\in U.$ Similarly, for
$x\in U$ we have as before that $\frac{1}{2}\lambda\left(  \left\{
x_{j}\right\}  \right)  \leq\left\Vert x\right\Vert _{\infty}$, with this
estimate holding for any $x\in U$ and $a\in\mathbb{R}$ so that $ax\in U.$ We
use these estimates in the sequel.

\subsection{Polynomials}

Let $X$ be a Banach space. A \textit{separating polynomial} on $X$ is a
polynomial $q$ on $X$ such that $0=q(0)<\inf\{|q(x)|:x\in S_{X}\}$. It is
known [FPWZ] that if $X$ is superreflexive and admits a $C^{\infty}$-smooth
bump function then $X$ admits a separating polynomial. The following lemma
makes precise, observations of Kurzweil in [K].

\begin{lemma}
\label{polynomial}Let $X$ be a real Banach space with norm $\left\|
\cdot\right\|  _{X}$ and suppose that there is a separating polynomial $p$ of
degree $n$ on $X$. Then there is a polynomial $q$ on $X$ such that $\left\|
y\right\|  _{X}^{2n}\leq q(y)$ for all $y$ in $X$ with $q(y)<1$. Furthermore,
there is a constant $K_{1}>0$ such that $q(y)\leq K_{1}\max\{\left\|
y\right\|  _{X},\left\|  y\right\|  _{X}^{2n}\}$ for all $y$ in $X$.
\end{lemma}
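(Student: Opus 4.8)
The plan is to build $q$ directly from the homogeneous components of $p$, following Kurzweil's idea. Write $p=\sum_{k=1}^{n}P_{k}$, where $P_{k}$ is the $k$-homogeneous part of $p$ (there is no constant term since $p(0)=0$), set $c=\inf\{|p(x)|:x\in S_{X}\}>0$, and let $M=\max_{1\le k\le n}\bigl(\sup_{x\in S_{X}}|P_{k}(x)|\bigr)^{2}$, which is finite because each $P_{k}$, being a continuous polynomial, is bounded on the unit sphere. The candidate will be
\[
q(y)=\frac{n}{c^{2}}\sum_{k=1}^{n}P_{k}(y)^{2},
\]
a polynomial of degree $2n$ with $q(0)=0$. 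It is essential to use the sum of squares of the homogeneous pieces rather than $p^{2}$ itself: along a ray $t\mapsto p(ty)$ the polynomial $p$ may vanish at nonzero points even though $p(y)\neq0$, so $p^{2}$ need not be bounded below by a multiple of $\|\cdot\|_{X}^{2n}$, whereas the sum of squares is.

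Next I would record two facts. By homogeneity, for $y\neq0$,
\[
q(y)=\frac{n}{c^{2}}\sum_{k=1}^{n}\|y\|_{X}^{2k}\,P_{k}\!\left(\tfrac{y}{\|y\|_{X}}\right)^{2}.
\]
And by the Cauchy--Schwarz inequality, for $x\in S_{X}$ we have $c^{2}\le|p(x)|^{2}=\bigl|\sum_{k=1}^{n}P_{k}(x)\bigr|^{2}\le n\sum_{k=1}^{n}P_{k}(x)^{2}$, so $\sum_{k=1}^{n}P_{k}(x)^{2}\ge c^{2}/n$ on $S_{X}$; equivalently $q\ge1$ on $S_{X}$.

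For the first assertion, split on the size of $\|y\|_{X}$ (the case $y=0$ being trivial). If $\|y\|_{X}\le1$ then $\|y\|_{X}^{2k}\ge\|y\|_{X}^{2n}$ for every $k\le n$, so the displayed identity together with the sphere bound gives $q(y)\ge\|y\|_{X}^{2n}$. If $\|y\|_{X}>1$ then $\|y\|_{X}^{2k}\ge\|y\|_{X}^{2}>1$ for every $k\ge1$, whence $q(y)\ge\|y\|_{X}^{2}>1$. Consequently $q(y)<1$ forces $\|y\|_{X}\le1$, and then the first case yields $\|y\|_{X}^{2n}\le q(y)$, as required. For the second assertion, bound $P_{k}(y/\|y\|_{X})^{2}\le M$ to get $q(y)\le\frac{nM}{c^{2}}\sum_{k=1}^{n}\|y\|_{X}^{2k}$; when $\|y\|_{X}\le1$ each summand is at most $\|y\|_{X}$, and when $\|y\|_{X}>1$ each summand is at most $\|y\|_{X}^{2n}$, so $q(y)\le\frac{n^{2}M}{c^{2}}\max\{\|y\|_{X},\|y\|_{X}^{2n}\}$, and one may take $K_{1}=n^{2}M/c^{2}$.

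I do not anticipate a genuine obstacle; the argument is elementary once the sum-of-squares trick is in hand. The two points demanding a little care are that $S_{X}$ is not compact in infinite dimensions, so the finiteness of $M$ must be justified via continuity of the $P_{k}$ (continuous polynomials are bounded on bounded sets) rather than by compactness, and that the comparison between $\|y\|_{X}^{2k}$ and $\|y\|_{X}^{2n}$ reverses at $\|y\|_{X}=1$, which is why I would carry the two cases $\|y\|_{X}\le1$ and $\|y\|_{X}>1$ through every estimate.
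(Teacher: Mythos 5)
Your proof is correct and follows essentially the same route as the paper: decompose $p$ into homogeneous parts $P_k$, take $q$ proportional to $\sum_k P_k^2$, normalize so that $q\ge 1$ on $S_X$, and compare $\|y\|_X^{2k}$ with $\|y\|_X^{2n}$ (resp.\ $\|y\|_X$) on either side of $\|y\|_X=1$. The only differences are cosmetic: you carry the normalizing factor $n/c^2$ explicitly where the paper simply says ``by scaling, we may assume $\eta=1$,'' and you spell out the Cauchy--Schwarz step and the reason $p^2$ alone would fail, both of which the paper leaves tacit.
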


\begin{proof}
Let $p=p_{1}+p_{2}+\cdots+p_{n}$, where $p_{i}$ is $i$-homogeneous for $1\leq
i\leq n$. Define $q=\sum_{i=1}^{n}q_{i}$ where $q_{i}=p_{i}^{2}$ is $2i
$-homogeneous for $1\leq i\leq n$. Then there is some $\eta>0$ such that
$q(x)\geq\eta$ for all $x\in S_{X}$. By scaling, we may assume that $\eta=1 $.
Suppose that $y\in X$ satisfies $\left\Vert y\right\Vert _{X}<1$. Then
$y=\alpha x$ where $|\alpha|<1$ and $x\in S_{X}$. We compute%

\begin{align*}
q(y)  &  =q(\alpha x)=q_{1}(\alpha x)+q_{2}(\alpha x)+\cdots+q_{n}(\alpha x)\\
&  =\alpha^{2}q_{1}(x)+\alpha^{4}q_{2}(x)+\cdots+\alpha^{2n}q_{n}(x)\\
&  \geq\alpha^{2n}q(x)=\left\Vert y\right\Vert _{X}^{2n}q(x)\geq\left\Vert
y\right\Vert _{X}^{2n}\text{.}%
\end{align*}

\smallskip

Now, suppose that $y=\alpha x$ and $x\in S_{X}$ with no constraint on $\alpha
$, and that $q(y)<1$. Then $\alpha^{2i}q_{i}(x)<1$ \ for all $i$. Were
$\alpha\geq1$ we would have $1>q(y)\geq q(x)$ contradicting $q(x)\geq1$. Thus
$\alpha<1$ and we obtain $\left\Vert y\right\Vert _{X}^{2n}\leq q(y)$. For the
second statement, let $q_{i}(x)=M_{i}(x,x,\ldots,x)$ for $1\leq i\leq n$,
where $M_{i}$ is a bounded$\ $and $2i$-homogeneous multilinear functional for
each such $i$. Then if $y=\alpha x$ for $x\in S_{X}$ we have%
\[
q_{i}(y)=q_{i}(\alpha x)=\alpha^{2i}M_{i}(x,x,\ldots,x)\leq\alpha^{2i}A_{i}%
\]
where $\sup\{M_{i}(x,x,\ldots,x):x\in B_{X}\}=A_{i}$. So%
\[
q(y)=\sum_{i=1}^{n}q_{i}(y)\leq\sum_{i=1}^{n}A_{i}\left\Vert y\right\Vert
_{X}^{2i}\text{. }%
\]
Let $K=\sum_{i=1}^{n}A_{i}$. Observing that $\left\Vert y\right\Vert _{X}%
^{2i}\leq\max\{\left\Vert y\right\Vert _{X},\left\Vert y\right\Vert _{X}%
^{2n}\}$ for each $i$, the required inequality follows.
\end{proof}

\medskip

\section{Main Results}

Let us remark that if $F$ is uniformly approximated by Lipschitz functions,
then a necessary condition on $F$ is that it be uniformly continuous. Also, if
the norm on $X$ can be uniformly approximated by analytic functions, then it
is easy to show that $X$ admits a $C^{\infty}$-smooth bump function. Finally,
we observe that if $G$ is star-shaped as well as bounded, then $F$ is
necessarily bounded on $G$ as it is uniformly continuous.

\medskip

Without loss of generality, for the remainder of this paper we may and do
assume that $0\in G.$ Let $q$ be the polynomial constructed in Lemma
\ref{polynomial}. We extend $q$ to a complex analytic map $\widetilde{q}$ on
$\widetilde{X}$. Let $M\geq1$ be such that $q<M$ on $G-G.$ Such an $M$ exists
because $G$ is bounded. Note that being the extension of a polynomial to
$\widetilde{X},$ $\widetilde{q}$ is Lipschitz on bounded neighbourhoods of
$G-G$ in $\widetilde{X}.$ For the remainder of the paper, let us fix such a
bounded neighbourhood $\widetilde{G}-G$ where $\widetilde{G}\subset
\widetilde{X}$ is a bounded neighbourhood containing $G$, and choose
$\widetilde{M}$ so that $\left\vert \widetilde{q}\right\vert \leq\widetilde
{M}$ on $\widetilde{G}-G$. Also, fix $R>1$ so that $G\subset B_{R}\left(
0\right)  .$

Since $F$ is bounded and real-valued, by composing with an appropriate first
order polynomial, we may suppose without loss of generality that $1\geq
F\geq1/3$ on $G$. Fix $\varepsilon\in\left(  0,1/4\right)  $ for the rest of
the proof, and fix $\delta>0$ so that by uniform continuity, for $x,y\in G$ we
have%
\[
\left\Vert x-y\right\Vert _{X}<\delta\text{ \ implies \ }\left\vert
F(x)-F(y)\right\vert <\varepsilon\text{.}%
\]
Let $\gamma_{1}<$ $\gamma_{2}<\gamma_{3}$ be positive with $\gamma_{3}<1$ so
small that $q(y)<\gamma_{3}$ implies $\left\Vert y\right\Vert _{X}<\delta$ and
such that $3\gamma_{1}<\gamma_{2}/2<1$. Using separability, let $\{x_{j}%
\}_{j=1}^{\infty}$ be a dense subset of $G$, and define three open coverings
of $G$ using the sets%
\[
C_{j}^{i}=\left\{  x\in X:q(x-x_{j})<\gamma_{i}\right\}  ,
\]
for $j\in\mathbb{N}$ and $i=1,2,3$. That these form open covers follows from
Lemma 2.

\medskip

\subsection{\noindent The functionals $\widetilde{\varphi}_{n}$}

\medskip

\noindent The purpose of this section is to establish the following key lemma.
We use the notation established above.

\begin{lemma}
For $\eta>0,$ there exists a sequence of complex analytic maps $\widetilde
{\varphi}_{n}:\widetilde{X}\rightarrow\mathbb{C}$ with the following properties:

\medskip

\begin{enumerate}
\item[(i).] The collection $\left\{  \widetilde{\varphi}_{n}\mid_{X}\right\}
$ is Uniformly Lipschitz (UL) on $G,$ with Lipschitz constant independent of
$\eta.$

\medskip

\item[(ii).] For each $x_{0}\in G,$ there exists $n$ with $\widetilde{\varphi
}_{n}\left(  x_{0}\right)  >1/2.$

\medskip

\item[(iii).] For each $x_{0}\in G,$ there exist $\delta>0$ and $n_{0}>1$ such
that for $\left\Vert z\right\Vert _{\widetilde{X}}<\delta$ and $n>n_{0}$ we have%

\[
\left\vert \widetilde{\varphi}_{n}\left(  x_{0}+z\right)  \right\vert <\eta.
\]

\end{enumerate}
\end{lemma}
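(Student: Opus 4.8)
The idea is to build each $\widetilde\varphi_n$ as an analytic "bump-like" function concentrated on the cover element $C_n^2$, by composing the analytic Preiss-type norm $\lambda$ (extended to a neighbourhood $U\supset c_0$ in $l_\infty$, from Lemma 1) with an analytic map $\widetilde T\colon\widetilde X\to\widetilde l_\infty$ whose coordinates are analytic functions of the separating-polynomial data $\widetilde q(x-x_j)$. Concretely, one first fixes, for each $j$, an analytic function $\theta_j\colon\mathbb C\to\mathbb C$ (e.g.\ a rescaled/truncated entire approximation to a plateau function) with $\theta_j(t)$ close to $1$ when $t$ is near the interval where $q(x-x_j)<\gamma_1$ and rapidly small (in modulus) once $\operatorname{Re}t$ exceeds $\gamma_2$; the key quantitative input is Lemma 2, which gives $\|y\|_X^{2n}\le q(y)$ whenever $q(y)<1$, so smallness of $q(x-x_j)$ forces $x$ near $x_j$, and the upper bound $q(y)\le K_1\max\{\|y\|_X,\|y\|_X^{2n}\}$ controls Lipschitz behaviour of $t\mapsto\widetilde q(x-x_j)$ on $\widetilde G-G$. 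Then set $\big(\widetilde T(x)\big)_j=1-\theta_j\big(\widetilde q(x-x_j)\big)$, arrange (by the decay of $\theta_j$ and the fact that the $x_j$ are dense, hence any fixed $x$ lies outside $C_j^3$ for all but "locally finitely many scales") that $\widetilde T(x)\in\widetilde c_0$ for $x\in G$ and indeed that $\widetilde T$ maps a neighbourhood $\widetilde G$ analytically into $U$-type sets in $\widetilde l_\infty$, and finally put $\widetilde\varphi_n(x)=1-\lambda_{n}\big(\widetilde T(x)\big)$ for a suitable analytic coordinate/section $\lambda_n$ built from $\lambda$ so that $\widetilde\varphi_n$ "sees" only the $n$-th slot. (The precise packaging of "only the $n$-th slot" is most cleanly done by taking $\widetilde\varphi_n(x)=\psi\big(\theta_n(\widetilde q(x-x_n))\big)$ with $\psi$ a fixed entire function that is $>1/2$ near $1$ and small near $0$, and using $\lambda$ only to get the genuinely analytic, globally defined plateau — so the role of Lemma 1 is to supply an analytic building block on an infinite-dimensional space, while the $c_0$-structure guarantees local uniform convergence of the relevant series.)

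\textbf{Verifying the three properties.} For (ii): given $x_0\in G$, density of $\{x_j\}$ yields $n$ with $q(x_0-x_n)<\gamma_1$, whence $\theta_n\big(\widetilde q(x_0-x_n)\big)$ is within, say, $1/4$ of $1$ and so $\widetilde\varphi_n(x_0)=\psi(\cdots)>1/2$ by the choice of $\psi$. For (iii): fix $x_0$. If $q(x_0-x_n)\ge\gamma_2$ then by continuity of $x\mapsto\widetilde q(x-x_n)$ (uniformly in $n$, via the Lipschitz bound from Lemma 2 and boundedness of $\widetilde G-G$) there is $\delta'>0$ with $\operatorname{Re}\widetilde q(x_0+z-x_n)$ still $>\gamma_2/2$ for $\|z\|_{\widetilde X}<\delta'$, forcing $|\theta_n(\cdots)|<\eta$, hence $|\widetilde\varphi_n(x_0+z)|<\eta$ after adjusting $\psi$; the only remaining $n$ are those with $q(x_0-x_n)<\gamma_2$, i.e.\ $x_0\in C_n^2$, and — this is the crucial point — there are only finitely many such $n$ because the $C_n^2$ are sets on which $\|x_0-x_n\|_X<\delta$ with the $x_n$ coming from a fixed enumeration, so one simply takes $n_0$ past all of them. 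Wait—that last assertion needs the cover to be "locally finite at $x_0$" in the index, which is \emph{not} automatic for a dense sequence; so instead one must build the decay of $\theta_n$ to get \emph{uniformly} small tails, i.e.\ choose the truncation level of $\theta_n$ to improve with $n$, so that $|\widetilde\varphi_n(x_0+z)|<\eta$ for all large $n$ regardless of whether $x_0\in C_n^2$. This reindexing/strengthening of the cutoffs is the technical heart.

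\textbf{Property (i) and the main obstacle.} Uniform Lipschitz-ness on $G$ of $\{\widetilde\varphi_n|_X\}$ reduces to: $x\mapsto \widetilde q(x-x_n)$ is Lipschitz on $G$ with constant independent of $n$ (true since $\widetilde q$ is Lipschitz on the fixed bounded set $\widetilde G-G$, with one constant), composed with the fixed entire $\theta_n,\psi$ whose derivatives on the relevant compact range $\{\widetilde q(x-x_n):x\in G\}\subset\{|w|\le\widetilde M\}$ are bounded — \emph{but only if} the entire functions $\theta_n$ have derivative bounds on that disc that do not blow up with $n$. That is exactly where the tension lies: making $\theta_n$ decay faster (for (iii)) while keeping $\theta_n'$ bounded on $\{|w|\le\widetilde M\}$ uniformly in $n$ (for (i)), with the constant also independent of $\eta$. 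I expect the resolution to mirror Kurzweil's device: realize the plateau not by a single entire function of one variable but through the Preiss norm $\lambda$ of Lemma 1 applied to a vector built from \emph{rescaled} copies, so that the "steepness" is absorbed into the $2n$-th powers defining $C$ (which are harmless because $|w|<3/4$ on the convergence set $U$) rather than into a derivative that grows. So the main obstacle is precisely the simultaneous control required by (i) and (iii) — a fast cutoff with a slow Lipschitz constant — and the paper's earlier extension of the Preiss norm to $U\subset l_\infty$ is, I believe, introduced exactly to thread this needle; everything else (analyticity of the composition, the bounds in (ii), the continuity estimates in (iii)) follows by assembling Lemma 1, Lemma 2, the analyticity of $\widetilde q$, and the local uniform convergence built into the $\widetilde c_0$-norm.
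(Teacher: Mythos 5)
Your proposal identifies the right tension (fast decay in (iii) versus uniform Lipschitz in (i)) but misguesses the resolving device, and more importantly builds $\widetilde\varphi_n$ with a single-coordinate structure that cannot satisfy (iii). Taking $\widetilde\varphi_n(x)=\psi\big(\theta_n(\widetilde q(x-x_n))\big)$ means $\widetilde\varphi_n$ sees only the $n$th datum $\widetilde q(x-x_n)$. Since $\{x_j\}$ is dense in $G$, for any fixed $x_0\in G$ there are \emph{infinitely many} $n$ with $q(x_0-x_n)<\gamma_1$, and for those $n$ your $\theta_n$ is by design near $1$, so $\widetilde\varphi_n(x_0)\approx\psi(1)>1/2$ rather than $<\eta$. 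You notice this yourself ("not automatic for a dense sequence"), but the patch you offer—"choose the truncation level of $\theta_n$ to improve with $n$"—is self-contradictory: any $\theta_n$ close to $1$ on small values of $q(\cdot-x_n)$ will give a large $\widetilde\varphi_n(x_0)$ whenever $q(x_0-x_n)$ is small, no matter how you truncate elsewhere. Your speculation that the extended Preiss norm $\lambda$ on $U\subset l_\infty$ is the device that threads the (i)/(iii) needle is also off the mark: that extension is used later, in the proof of Theorem 1, to assemble the final function $K$ from the $u_j$'s; it plays no role in the construction of the $\widetilde\varphi_n$.

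The paper's actual construction fixes both problems at once by making $\varphi_n$ depend on the full $n$-tuple $\lambda_n(x)=(q(x-x_1),\dots,q(x-x_n))$ rather than on the single coordinate $q(x-x_n)$. It takes a uniformly Lipschitz plateau $b_n$ on $l_\infty^n$ supported on $\overline{A}_n=\overline{\{y:\ 2\gamma_1<y_j<M+2\ \text{for all }j\le n\}}$ and sets $\varphi_n=\nu_n\circ\lambda_n$ where $\nu_n$ is a Gaussian mollification of $b_n$ with kernel parameter $\kappa_n$. This resolves (i) versus (iii) cleanly: convolution against a probability kernel never increases a Lipschitz constant, so $\mathrm{Lip}(\nu_n)\le\mathrm{Lip}(b_n)=L_b$ \emph{independently of $\kappa_n$}, giving (i); while $\kappa_n$ can be taken as large as one likes to force decay. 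Crucially, (iii) then follows because if $x_0\in C_{j_0}^1$ (so $q(x_0-x_{j_0})<\gamma_1$), then for every $n>j_0$ the $j_0$th coordinate is always present in the integrand, and on $\mathrm{supp}(b_n)$ one has $y_{j_0}>2\gamma_1$, hence $\bigl(q(x_0-x_{j_0})-y_{j_0}\bigr)^2>\gamma_1^2$; together with $\kappa_n^{n/2}\ge(n!)^2\widehat T_n/\mathrm{Vol}(A_n)$ this kills the integral uniformly in $n>j_0$, even though $x_0$ lies in infinitely many $C_n^1$. This "being close to $x_{j_0}$ excludes you from the support of $b_n$ via coordinate $j_0$" mechanism is the idea your single-coordinate ansatz lacks, and it is what allows the decay in (iii) to happen for \emph{all} large $n$ rather than only for $n$ with $q(x_0-x_n)\ge\gamma_2$.
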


\medskip

\noindent{\small P}{\scriptsize ROOF}\textsc{.}\textbf{\ \ }Define subsets
\[
A_{n}=\left\{  y=\left\{  y_{j}\right\}  _{j=1}^{n}\in l_{\infty}^{n}%
:2\gamma_{1}<y_{j}<M+2\text{ for }1\leq j\leq n\right\}
\]
and
\[
A_{n}^{\prime}=\left\{  y=\left\{  y_{j}\right\}  _{j=1}^{n}\in l_{\infty}%
^{n}:3\gamma_{1}<y_{j}<M+1\text{ for }1\leq j\leq n\right\}  .
\]

\medskip

Let $b\in C^{\infty}\left(  \mathbb{R},\left[  0,1\right]  \right)  $ such
that $b\left(  t\right)  =1$ iff $t\notin\left(  2\gamma_{1},M+2\right)  ,$
and $b\left(  t\right)  =0$ iff $t\in\left[  3\gamma_{1},M+1\right]  .$ Now
define $b_{n}:l_{\infty}^{n}\rightarrow\left[  0,1\right]  $ by $b_{n}\left(
y_{1},...,y_{n}\right)  =1-\left\Vert \left(  b\left(  y_{1}\right)
,...,b\left(  y_{n}\right)  \right)  \right\Vert _{\infty}.$ Then
support$\left(  b_{n}\right)  =\overline{A}_{n},$ and $b_{n}=1$ on
$A_{n}^{\prime}.$ Moreover, $b_{n}$ is Lipschitz with constant $L_{b}$
independent of $n.$

\medskip

Define $\nu_{n}:l_{\infty}^{n}\rightarrow\mathbb{R}$ by%
\[
\nu_{n}\left(  x\right)  =\frac{1}{T_{n}}\int_{\mathbb{R}^{n}}b_{n}\left(
y\right)  e^{-\kappa_{n}\sum_{j=1}^{n}2^{-j}\left(  x_{j}-y_{j}\right)  ^{2}%
}dy,
\]
with
\begin{align*}
T_{n}  &  =\int_{\mathbb{R}^{n}}e^{-\kappa_{n}\sum_{j=1}^{n}2^{-j}y_{j}^{2}%
}dy\\
& \\
&  =\frac{1}{\kappa_{n}^{n/2}}\int_{\mathbb{R}^{n}}e^{-\sum_{j=1}^{n}%
2^{-j}y_{j}^{2}}dy\\
& \\
&  \equiv\frac{1}{\kappa_{n}^{n/2}}\ \widehat{T}_{n},
\end{align*}
and where the constants $\kappa_{n}^{n/2}\geq\left(  n!\right)  ^{2}\left(
\frac{\widehat{T}_{n}}{Vol\left(  A_{n}\right)  }\right)  $ shall be further
specified later.

Note that
\begin{align*}
\nu_{n}\left(  x\right)   &  =\frac{1}{T_{n}}\int_{\mathbb{R}^{n}}b_{n}\left(
y\right)  e^{-\kappa_{n}\sum_{j=1}^{n}2^{-j}\left(  x_{j}-y_{j}\right)  ^{2}%
}dy\\
& \\
&  =\frac{1}{T_{n}}\int_{\mathbb{R}^{n}}b_{n}\left(  x-y\right)
e^{-\kappa_{n}\sum_{j=1}^{n}2^{-j}y_{j}^{2}}dy,
\end{align*}
and so%

\begin{align*}
\left\vert \nu_{n}\left(  x\right)  -\nu_{n}\left(  x^{\prime}\right)
\right\vert  &  =\left\vert \frac{1}{T_{n}}\int_{\mathbb{R}^{n}}\left(
b_{n}\left(  x-y\right)  -b_{n}\left(  x^{\prime}-y\right)  \right)
e^{-\kappa_{n}\sum_{j=1}^{n}2^{-j}y_{j}^{2}}dy\right\vert \\
& \\
&  \leq\frac{1}{T_{n}}\int_{\mathbb{R}^{n}}\left\vert b_{n}\left(  x-y\right)
-b_{n}\left(  x^{\prime}-y\right)  \right\vert e^{-\kappa_{n}\sum_{j=1}%
^{n}2^{-j}y_{j}^{2}}dy\\
& \\
&  \leq L_{b}\ \left\Vert x-x^{\prime}\right\Vert _{\infty}\frac{1}{T_{n}}%
\int_{\mathbb{R}^{n}}e^{-\kappa_{n}\sum_{j=1}^{n}2^{-j}y_{j}^{2}}dy\\
& \\
&  =L_{b}\left\Vert x-x^{\prime}\right\Vert _{\infty}.
\end{align*}
Hence, $\nu_{n}$ is $L_{b}$-Lipschitz.

\medskip

\noindent Next, consider the map $\lambda_{n}:G\rightarrow l_{\infty}^{n}$
given by
\[
\lambda_{n}\left(  x\right)  =\left(  q\left(  x-x_{1}\right)  ,...,q\left(
x-x_{n}\right)  \right)  .
\]
\noindent Then for $n\geq1$ define (real) analytic maps $\varphi
_{n}:G\rightarrow\mathbb{R}$ by

\medskip%

\begin{align*}
\varphi_{n}\left(  x\right)   &  =\nu_{n}\left(  \lambda_{n}\left(  x\right)
\right)  =\nu_{n}\left(  \left\{  q\left(  x-x_{j}\right)  \right\}
_{j=1}^{n}\right) \\
& \\
&  =\frac{1}{T_{n}}\int_{\mathbb{R}^{n}}b_{n}(y)e^{-\kappa_{n}\sum_{j=1}%
^{n}2^{-j}\left(  q\left(  x-x_{j}\right)  -y_{j}\right)  ^{2}}dy\text{.}%
\end{align*}
Also, define $\varphi_{0}(x)=1$ for all $x$. Observe that
\begin{equation}
\int_{\mathbb{R}^{n}}e^{-\kappa_{n}\sum_{j=1}^{n}2^{-j}\left(  q\left(
x-x_{j}\right)  -y_{j}\right)  ^{2}}dy=\int_{\mathbb{R}^{n}}e^{-\kappa_{n}%
\sum_{j=1}^{n}2^{-j}y_{j}^{2}}dy=T_{n},
\end{equation}

\noindent and so in particular for any $n$ (using $b_{n}\in\left[  0,1\right]
$)%
\begin{align*}
\varphi_{n}\left(  0\right)   &  =\frac{1}{T_{n}}\int_{\mathbb{R}^{n}}%
b_{n}\left(  y\right)  e^{-\kappa_{n}\sum_{j=1}^{n}2^{-j}\left(  q\left(
-x_{j}\right)  -y_{j}\right)  ^{2}}dy\\
& \\
&  \leq\frac{1}{T_{n}}\int_{\mathbb{R}^{n}}e^{-\kappa_{n}\sum_{j=1}^{n}%
2^{-j}\left(  q\left(  -x_{j}\right)  -y_{j}\right)  ^{2}}dy\\
& \\
&  =\frac{1}{T_{n}}\int_{\mathbb{R}^{n}}e^{-\kappa_{n}\sum_{j=1}^{n}%
2^{-j}y_{j}^{2}}dy=1.
\end{align*}

\medskip

\noindent Now, as to the Lipschitz properties of the $\varphi_{n},$ recall
that since $G$ is bounded and $q$ is a polynomial, $q$ is Lipschitz on $G-G$
with constant, say, $L_{q}.$ Now%

\begin{align*}
\left\vert \varphi_{n}\left(  x\right)  -\varphi_{n}\left(  x^{\prime}\right)
\right\vert  &  =\left\vert \nu_{n}\left(  \lambda_{n}\left(  x\right)
\right)  -\nu_{n}\left(  \lambda_{n}\left(  x^{\prime}\right)  \right)
\right\vert \\
& \\
&  \leq L_{b}\left\Vert \lambda_{n}\left(  x\right)  -\lambda_{n}\left(
x^{\prime}\right)  \right\Vert _{\infty}\\
& \\
&  =L_{b}\left\Vert \left\{  q\left(  x-x_{j}\right)  -q\left(  x^{\prime
}-x_{j}\right)  \right\}  _{j=1}^{n}\right\Vert _{\infty}\\
& \\
&  \leq L_{b}L_{q}\left\Vert x-x^{\prime}\right\Vert _{X},
\end{align*}
and so the collection $\left\{  \varphi_{n}\right\}  $ is UL on $G$ with
constant $L_{b}L_{q}$. Using this fact, and $\varphi_{n}\left(  0\right)
\leq1,$ we put $W_{1}=L_{b}L_{q}R+1,$ and note that $\left\vert \varphi
_{n}(x)\right\vert \leq W_{1}$ for all $x\in G\subset B_{R}\left(  0\right)  $
and all $n\geq0$. In particular, $W_{1}$ is independent of $\kappa_{n}.$

\medskip

\noindent We extend the maps $\varphi_{n}$ to complex valued maps on
$\widetilde{X},$ calling them $\widetilde{\varphi}_{n}.$ Namely (where $x\in
X$)%

\[
\widetilde{\varphi}_{n}\left(  x+z\right)  =\frac{1}{T_{n}}\int_{\mathbb{R}%
^{n}}b_{n}\left(  y\right)  e^{-\kappa_{n}\sum_{j=1}^{n}2^{-j}\left(
\widetilde{q}\left(  x-x_{j}+z\right)  -y_{j}\right)  ^{2}}dy
\]

\noindent Note that the $\widetilde{\varphi}_{n}$ are complex analytic, and
that the above calculation establishes (i) as $\widetilde{\varphi}_{n}\mid
_{X}=\varphi_{n}.$

\medskip

\noindent We next show (iii). For $x\in X$ and $z\in\widetilde{X},$ we have
(see \cite{K} which references \cite{H})%
\[
\widetilde{q}\left(  x-x_{j}+z\right)  =q\left(  x-x_{j}\right)  +Z_{j},
\]
where $Z_{j}\in\mathbb{C}$ with $\left\vert Z_{j}\right\vert \leq
C(1+\left\Vert x-x_{j}\right\Vert _{X})^{m}\left\Vert z\right\Vert
_{\widetilde{X}},$ $C$ being a constant and $m$ the degree of $q.$ As $G$ is
bounded, we can bound $C(1+\left\Vert x-x_{j}\right\Vert _{X})^{m}$ above by
some constant $M_{1}.$

\medskip

Now%
\begin{align*}
\left(  \widetilde{q}\left(  x-x_{j}+z\right)  -y_{j}\right)  ^{2}  &
=\left(  q\left(  x-x_{j}\right)  -y_{j}+Z_{j}\right)  ^{2}\\
&  =\left(  q\left(  x-x_{j}\right)  -y_{j}\right)  ^{2}+2\left(  q\left(
x-x_{j}\right)  -y_{j}\right)  Z_{j}+Z_{j}^{2}.
\end{align*}
Hence%
\begin{align*}
&  \operatorname{Re}\left(  \widetilde{q}\left(  x-x_{j}+z\right)
-y_{j}\right)  ^{2}\\
& \\
&  =\left(  q\left(  x-x_{j}\right)  -y_{j}\right)  ^{2}+2\left(  q\left(
x-x_{j}\right)  -y_{j}\right)  \operatorname{Re}Z_{j}+\operatorname{Re}%
Z_{j}^{2}\\
& \\
&  \geq\left(  q\left(  x-x_{j}\right)  -y_{j}\right)  ^{2}-2\left\vert
q\left(  x-x_{j}\right)  -y_{j}\right\vert \left\vert Z_{j}\right\vert
-\left\vert Z_{j}^{2}\right\vert \\
& \\
&  =\left(  \left\vert q\left(  x-x_{j}\right)  -y_{j}\right\vert -\left\vert
Z_{j}\right\vert \right)  ^{2}-2\left\vert Z_{j}\right\vert ^{2}\\
& \\
&  \geq\left(  \left\vert q\left(  x-x_{j}\right)  -y_{j}\right\vert
-\left\vert Z_{j}\right\vert \right)  ^{2}-2M_{1}^{2}\left\Vert z\right\Vert
_{\widetilde{X}}^{2}.
\end{align*}

Next, for each $x_{0}\in G$ there exists $j_{0}$ so that $x_{0}\in C_{j_{0}%
}^{1}$ and thus $q\left(  x_{0}-x_{j_{0}}\right)  <\gamma_{1},$ and also for
$y\in A_{j_{0}}\subset\ $support$\left(  b_{j_{0}}\right)  $ we have
$y_{j_{0}}>2\gamma_{1}.$ Hence, $y_{j_{0}}-q\left(  x_{0}-x_{j_{0}}\right)
>2\gamma_{1}-\gamma_{1}>\gamma_{1}.$ Thus, for $\left\Vert z\right\Vert
_{\widetilde{X}}<\frac{\gamma_{1}}{2M_{1}}$ we have $\left\vert q\left(
x_{0}-x_{j_{0}}\right)  -y_{j_{0}}\right\vert -\left\vert Z_{j_{0}}\right\vert
\geq\left\vert q\left(  x_{0}-x_{j_{0}}\right)  -y_{j_{0}}\right\vert
-M_{1}\left\Vert z\right\Vert _{\widetilde{X}}>\gamma_{1}/2.$ It follows that
for $\left\Vert z\right\Vert _{\widetilde{X}}<\frac{\gamma_{1}}{2M_{1}}$ and
$n>j_{0}$, that%

\begin{align*}
&  \sum_{j=1}^{n}2^{-j}\operatorname{Re}\left(  \widetilde{q}\left(
x_{0}-x_{j}+z\right)  -y_{j}\right)  ^{2}\\
& \\
&  \geq\sum_{j=1}^{n}2^{-j}\left(  \left(  \left\vert q\left(  x_{0}%
-x_{j}\right)  -y_{j}\right\vert -\left\vert Z_{j}\right\vert \right)
^{2}-2M_{1}^{2}\left\Vert z\right\Vert _{\widetilde{X}}^{2}\right) \\
& \\
&  \geq2^{-j_{0}}\left(  \left\vert q\left(  x_{0}-x_{j_{0}}\right)
-y_{j_{0}}\right\vert -\left\vert Z_{j_{0}}\right\vert \right)  ^{2}%
-2M_{1}^{2}\sum_{j=1}^{n}2^{-j}\left\Vert z\right\Vert _{\widetilde{X}}^{2}\\
& \\
&  >2^{-j_{0}-1}\gamma_{1}-2M_{1}^{2}\left\Vert z\right\Vert _{\widetilde{X}%
}^{2}.
\end{align*}
Hence, there exists $\delta>0$ and $a>0$ such that $\left\Vert z\right\Vert
_{X}<\delta$ and $n>j_{0}$ implies $\sum_{j=1}^{n}2^{-j}\operatorname{Re}%
\left(  \widetilde{q}\left(  x_{0}-x_{j}+z\right)  -y_{j}\right)  ^{2}>a.$

\medskip

\noindent Putting this together shows that for each $x_{0}\in X$ there exists
$\widehat{n}_{0}>j_{0},$ $\delta>0$ and $a>0$ so that for all $n>\widehat
{n}_{0}$ and $\left\Vert z\right\Vert _{\widetilde{X}}<\delta,$ we have%

\begin{align*}
\left\vert \widetilde{\varphi}_{n}\left(  x_{0}+z\right)  \right\vert  &
=\left\vert \frac{1}{T_{n}}\int_{\mathbb{R}^{n}}b_{n}\left(  y\right)
e^{-\kappa_{n}\sum_{j=1}^{n}2^{-j}\left(  \widetilde{q}\left(  x_{0}%
-x_{j}+z\right)  -y_{j}\right)  ^{2}}dy\right\vert \\
& \\
&  \leq\frac{1}{T_{n}}\int_{\mathbb{R}^{n}}b_{n}\left(  y\right)
e^{-\kappa_{n}\operatorname{Re}\sum_{j=1}^{n}2^{-j}\left(  \widetilde
{q}\left(  x_{0}-x_{j}+z\right)  -y_{j}\right)  ^{2}}dy\\
& \\
&  \leq\frac{1}{T_{n}}\int_{A_{n}}e^{-\kappa_{n}a}dy=\frac{Vol\left(
A_{n}\right)  }{T_{n}}e^{-\kappa_{n}a}.
\end{align*}
\noindent Now, by choice of $\kappa_{n}$ (see above) and the definition of
$\widehat{T}_{n}$ (which is independent of $\kappa_{n}$)%
\[
\frac{Vol\left(  A_{n}\right)  }{T_{n}}\ e^{-\kappa_{n}a}<\frac{Vol\left(
A_{n}\right)  }{T_{n}}\ \frac{n!}{\kappa_{n}^{n}a^{n}}=\frac{Vol\left(
A_{n}\right)  }{\widehat{T}_{n}}\ \frac{n!}{\kappa_{n}^{n/2}a^{n}}<\frac
{1}{n!a^{n}},
\]

\noindent and so by choosing $n_{0}>\widehat{n}_{0}$ sufficiently large, we
can guarantee that for $n>n_{0}$ and $\left\Vert z\right\Vert _{\widetilde{X}%
}<\delta$ we have $\left\vert \widetilde{\varphi}_{n}\left(  x_{0}+z\right)
\right\vert <\eta$, and so (iii) is proven.

\medskip

\noindent Finally we show (ii). Now, since $A_{n}^{\prime}\subset A_{n},$
$b_{n}=1$ on $A_{n}^{\prime},$ and the integrand is positive$,$ we have
\begin{align*}
&  \frac{1}{T_{n}}\int_{A_{n}^{\prime}}e^{-\kappa_{n}\sum_{j=1}^{n}%
2^{-j}\left(  q\left(  x-x_{j}\right)  -y_{j}\right)  ^{2}}dy\\
& \\
&  =\frac{1}{T_{n}}\int_{A_{n}^{\prime}}b_{n}\left(  y\right)  e^{-\kappa
_{n}\sum_{j=1}^{n}2^{-j}\left(  q\left(  x-x_{j}\right)  -y_{j}\right)  ^{2}%
}dy\\
& \\
&  <\frac{1}{T_{n}}\int_{A_{n}}b_{n}\left(  y\right)  e^{-\kappa_{n}\sum
_{j=1}^{n}2^{-j}\left(  q\left(  x-x_{j}\right)  -y_{j}\right)  ^{2}}dy.
\end{align*}
Finally, using $b_{n}\leq1,$ we observe that $\int_{\mathbb{R}^{n}}%
e^{-\kappa_{n}\sum_{j=1}^{n}2^{-j}\left(  q\left(  x-x_{j}\right)
-y_{j}\right)  ^{2}}dy-\int_{A_{n}}b_{n}\left(  y\right)  e^{-\kappa_{n}%
\sum_{j=1}^{n}2^{-j}\left(  q\left(  x-x_{j}\right)  -y_{j}\right)  ^{2}%
}dy\geq0.$ Thus, putting this together and using $\left(  3.1\right)  $ gives

\medskip%

\begin{align}
&  1-\varphi_{n}\left(  x\right) \nonumber\\
& \nonumber\\
&  =\frac{1}{T_{n}}\int_{\mathbb{R}^{n}}e^{-\kappa_{n}\sum_{j=1}^{n}%
2^{-j}\left(  q\left(  x-x_{j}\right)  -y_{j}\right)  ^{2}}dy\nonumber\\
& \nonumber\\
&  -\frac{1}{T_{n}}\int_{A_{n}}b_{n}\left(  y\right)  e^{-\kappa_{n}\sum
_{j=1}^{n}2^{-j}\left(  q\left(  x-x_{j}\right)  -y_{j}\right)  ^{2}%
}dy\nonumber\\
& \\
&  \leq\frac{1}{T_{n}}\left(  \int_{\mathbb{R}^{n}}e^{-\kappa_{n}\sum
_{j=1}^{n}2^{-j}\left(  q\left(  x-x_{j}\right)  -y_{j}\right)  ^{2}}dy\right.
\nonumber\\
& \nonumber\\
&  \left.  -\int_{A_{n}^{\prime}}e^{-\kappa_{n}\sum_{j=1}^{n}2^{-j}\left(
q\left(  x-x_{j}\right)  -y_{j}\right)  ^{2}}dy\right) \nonumber\\
& \nonumber\\
&  =\frac{1}{T_{n}}\int_{\mathbb{R}^{n}\backslash A_{n}^{\prime}}%
e^{-\kappa_{n}\sum_{j=1}^{n}2^{-j}\left(  q\left(  x-x_{j}\right)
-y_{j}\right)  ^{2}}dy\nonumber
\end{align}

Next, for each fixed $x_{0}\in G,$ there exists $j_{0}$ with $x_{0}\in
C_{j_{0}}^{2}$ but with $x_{0}\notin G\backslash C_{i}^{2}$ for $1\leq
i<j_{0}$. Hence, $q\left(  x_{0}-x_{j}\right)  \geq\gamma_{2}$ for all $1\leq
j<j_{0}.$ Now suppose that $\left\Vert \left\{  q\left(  x_{0}-x_{j}\right)
-y_{j}\right\}  _{j<j_{0}}\right\Vert _{\infty}<\gamma_{2}/2.$ Then
$y_{j}>q\left(  x_{0}-x_{j}\right)  -\gamma_{2}/2\geq\gamma_{2}/2>3\gamma_{1}$
for $1\leq j<j_{0}.$ Also, $y_{j}<q\left(  x_{0}-x_{j}\right)  +\gamma
_{2}/2\leq M+1.$ Hence, putting $p_{0}=\left\{  q\left(  x_{0}-x_{j}\right)
\right\}  _{j<j_{0}}\in l_{\infty}^{j_{0}-1},$ we have $B_{\gamma_{2}%
/2}\left(  p_{0}\right)  \subset A_{j_{0}-1}^{\prime}.$ Thus, writing
$q\left(  x_{0}-x_{j}\right)  =\left(  p_{0}\right)  _{j},$ and for ease of
notation $n=j_{0}-1,$ we have, using $(3.2)$%

\begin{align*}
\left\vert 1-\varphi_{n}\left(  x_{0}\right)  \right\vert  &  \leq\frac
{1}{T_{n}}\int_{\mathbb{R}^{n}\backslash B_{\gamma_{2}/2}\left(  p_{0}\right)
}e^{-\kappa_{n}\sum_{j=1}^{n}2^{-j}\left(  \left(  p_{0}\right)  _{j}%
-y_{j}\right)  ^{2}}dy\\
&  =\frac{1}{T_{n}}\int_{\mathbb{R}^{n}\backslash B_{\gamma_{2}/2}\left(
0\right)  }e^{-\kappa_{n}\sum_{j=1}^{n}2^{-j}y_{j}^{2}}dy.
\end{align*}
By choosing $\kappa_{n}$ larger if necessary, we can guarantee that
\[
\frac{1}{T_{n}}\int_{\mathbb{R}^{n}\backslash B_{\gamma_{2}/2}\left(
0\right)  }e^{-\kappa_{n}\sum_{j=1}^{n}2^{-j}y_{j}^{2}}dy<1/2,
\]
and so $\varphi_{n}\left(  x_{0}\right)  >1/2.$ If however $j_{0}=1$ then
$\varphi_{j_{0}-1}(x_{0})=\varphi_{0}(x_{0})=1>1/2$. $\square$

\medskip

\subsection{The functionals $\psi_{j}$}

Let $\zeta^{1}:\mathbb{[}0,M]\rightarrow\mathbb{R}$ satisfy $\zeta^{1}\left(
t\right)  \geq1/8$ for $t\in\left[  0,M\right]  $, be real analytic and
Lipschitz, and be such that $\zeta^{1}\left(  t\right)  <1/4$ if $t\leq
\gamma_{2},$ and $\zeta^{1}\left(  t\right)  \geq1$ if $t\geq\gamma_{3}.$ We
can choose $\zeta^{1}$ to be a polynomial, and hence it can be extended as a
complex analytic map on $\mathbb{C},$ Lipschitz on $B_{\widetilde{M}}\left(
0\right)  \subset\mathbb{C},$ and if so we keep the same notation. Using this,
define on $G$, for each $j\geq1$%
\[
f_{j}(x)=\zeta^{1}\left(  q\left(  x-x_{j}\right)  \right)  .
\]
Each $f_{j}$ is real analytic on $G,$ and the collection $\left\{
f_{j}\right\}  $ is UL on $G$ since as pointed out above, $q$ is bounded (and
Lipschitz) on $G-G$, and $\zeta^{1}$ is Lipschitz. Let us write
\[
\widetilde{f}_{j}\left(  z\right)  =\zeta^{1}\left(  \widetilde{q}\left(
z-x_{j}\right)  \right)  \text{,}%
\]
noting that similarly $\left\{  \widetilde{f}_{j}\right\}  $ is UL on
$\widetilde{G}.$

\medskip

Next, for $j\geq1$ define $\zeta^{2}:\mathbb{[}0,W_{1}]\rightarrow$
$\mathbb{R}$ to be strictly positive with $\zeta^{2}\left(  t\right)  \geq1/8$
for $t\in\left[  0,W_{1}\right]  $, real analytic, Lipshitz, and such that
$\zeta^{2}\left(  t\right)  \geq2\ $if $t\leq1/4,\ \zeta^{2}\left(  t\right)
<1/4$ if $t\geq1/2.\ $We can choose $\zeta^{2}$ to be a polynomial, and hence
it can be extended as a complex analytic map on $\mathbb{C},$ which is
Lipschitz on $B_{W_{1}+1}\left(  0\right)  \subset\mathbb{C}$ with constant
$L_{2}\geq1,$ and if so we keep the same notation. For each $j\geq1$ define a
real analytic mapping on $G$ by%

\[
g_{j}(x)=\zeta^{2}\left(  \varphi_{j-1}\left(  x\right)  \right)  .
\]

\noindent And for each $j\geq1$ define a complex analytic mapping on
$\widetilde{X}$ by%

\[
\widetilde{g}_{j}(x)=\zeta^{2}\left(  \widetilde{\varphi}_{j-1}\left(
x\right)  \right)  .
\]

\noindent The collection $\left\{  g_{j}\right\}  $ is UL on $G$ as the same
is true of the $\varphi_{j}.$

\medskip

\noindent It will be convenient to define the constant $T$ by%
\[
T=\sup\left\{  |z|:z\in\zeta^{1}(B_{\widetilde{M}}(0))\right\}  +\sup\left\{
|z|:z\in\zeta^{2}(B_{W_{1}+1}(0))\right\}  \text{.}%
\]

\medskip

\noindent Let $h:[0,T]\rightarrow\left[  0,1\right]  $ be strictly greater
than $0$, real analytic, Lipschitz, and such that $h\left(  t\right)
<\varepsilon/4<1/10$ if $t\geq3/4,$ and $h\left(  t\right)  \geq4/5$ if
$t\leq1/2.$ We can choose $h$ to be a polynomial, and hence it can be extended
as a complex analytic map on $\mathbb{C},$ Lipschitz on $B_{T}\left(
0\right)  \subset\mathbb{C}$ with constant $L_{h}\geq1$, and if so we keep the
same notation.

\noindent Now define maps%

\[
\psi_{j}(x)=f_{j}\left(  x\right)  +g_{j}\left(  x\right)  \text{,}%
\]

\noindent and the maps%

\[
\widetilde{\psi}_{j}(x)=\widetilde{f}_{j}\left(  x\right)  +\widetilde{g}%
_{j}\left(  x\right)  .
\]

\medskip

Note that on $G$, each $\psi_{j}$ is analytic and the collection $\left\{
\psi_{j}\right\}  _{j}$ is UL on $G$. We pause to summarize the important
properties of the $\psi_{j}$ functions in a lemma.

\begin{lemma}
\label{Mainlemma}For the functions $\psi_{j}$ defined above, we have
\end{lemma}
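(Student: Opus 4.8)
The plan is to enumerate and verify the properties of the functions $\psi_j = f_j + g_j$ that will be needed to assemble the final approximant $K$ in the subsequent section. Based on the construction, the properties I expect to state and prove are: (1) the collection $\{\psi_j\}$ is uniformly Lipschitz on $G$ and $\{\widetilde{\psi}_j\}$ is uniformly Lipschitz on $\widetilde{G}$, with a bound independent of $\eta$ (hence independent of the $\kappa_n$); (2) each $\psi_j$ and $\widetilde{\psi}_j$ is real- (resp. complex-) analytic; (3) lower and upper bounds of the form $\psi_j \geq c > 0$ on all of $G$, with $c$ something like $1/4$; and most importantly (4) a "localization" dichotomy: for each $x_0 \in G$ and for $j$ large (beyond some index depending on $x_0$), $\psi_j(x_0) \geq 3/4$ say, while the set $\{x : \psi_j(x) < 3/4\}$ contains a neighbourhood of $x_0$ only when $q(x_0 - x_j)$ is small, i.e., the small-value set of $\psi_j$ is contained in $C_j^3$ (or thereabouts). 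The point is that $f_j$ is small exactly when $q(x-x_j) \leq \gamma_2$ (where it is $< 1/4$) and large when $q(x-x_j) \geq \gamma_3$ (where it is $\geq 1$), while $g_j = \zeta^2(\varphi_{j-1})$ is large ($\geq 2$) precisely when $\varphi_{j-1}$ is small and small ($< 1/4$) when $\varphi_{j-1}$ is large ($> 1/2$).

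The strategy for each property is direct. For analyticity and the uniform Lipschitz bounds, I would simply compose the already-established facts: $q$ is $L_q$-Lipschitz and bounded on $G-G$, $\zeta^1$ is Lipschitz on $B_{\widetilde M}(0)$, $\varphi_{j-1}$ is $L_bL_q$-Lipschitz and bounded by $W_1$, $\zeta^2$ is $L_2$-Lipschitz on $B_{W_1+1}(0)$, and composition of Lipschitz maps on the relevant bounded ranges is Lipschitz with the product constant; analyticity follows since each ingredient is analytic (polynomials composed with entire functions $\zeta^1, \zeta^2$), and for the tilde versions one uses that $\widetilde q$ and $\widetilde\varphi_{j-1}$ are complex analytic with ranges landing in the discs where $\zeta^1, \zeta^2$ are defined as entire functions. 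For the positive lower bound, observe $f_j = \zeta^1(q(x-x_j)) \geq 1/8$ and $g_j = \zeta^2(\varphi_{j-1}(x)) \geq 1/8$ always, so $\psi_j \geq 1/4$ on $G$.

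The heart of the lemma is property (4). For the first half: fix $x_0 \in G$. By Lemma on $\widetilde\varphi_n$, part (ii), there is $n$ with $\varphi_n(x_0) > 1/2$; take $j_0$ minimal with $\varphi_{j_0-1}(x_0) > 1/2$. Then for this $j_0$ (and in fact all larger indices for which $\varphi$ stays above $1/2$, which is where part (iii) of the previous lemma comes in to control behaviour near $x_0$ for large indices), $g_{j_0}(x_0) = \zeta^2(\varphi_{j_0-1}(x_0)) < 1/4$ — wait, we need a lower bound on $\psi_{j_0}$, so the relevant regime is the opposite. Let me reconsider: for large $j$, part (iii) says $\varphi_{j-1}$ is near $0$ in a neighbourhood of $x_0$, so $g_j = \zeta^2(\varphi_{j-1}) \geq 2$ there, giving $\psi_j \geq 2$ near $x_0$ for all large $j$ — that is the "large on a neighbourhood" half, and it needs part (iii) of the previous lemma applied with a small $\eta$ (say $\eta < 1/4$) so that $\varphi_{j-1} < 1/4$ forces $\zeta^2 \geq 2$. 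For the second half — that the sublevel set $\{\psi_j < 3/4\}$ forces $q(x-x_j)$ small: if $\psi_j(x) < 3/4$ then since $g_j \geq 1/8 > 0$ and $f_j \leq 3/4$, and more usefully if $\psi_j(x)$ is small then both $f_j(x) < 3/4$ and (crucially) $f_j(x) < 1$ forces $q(x - x_j) < \gamma_3$ (since $\zeta^1 \geq 1$ when $q \geq \gamma_3$), hence $\|x - x_j\|_X < \delta$ by the choice of $\gamma_3$. This is the mechanism that will later let us conclude $F(x) \approx F(x_j)$ on the support-relevant region.

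The main obstacle I anticipate is bookkeeping the interplay between the two "scales" — the $f_j$ scale governed by $q(x-x_j)$ versus the $\gamma_i$, and the $g_j$ scale governed by $\varphi_{j-1}$ versus $1/4, 1/2$ — and in particular making sure the index $j_0$ chosen so that $\varphi_{j_0 - 1}(x_0) > 1/2$ is compatible with the earlier covering condition that $x_0 \in C_{j_0}^2$ but $x_0 \notin C_i^2$ for $i < j_0$, which is exactly how part (ii) of the previous lemma was proved. One must be careful that $\eta$ was a free parameter in the previous lemma and that the uniform Lipschitz constant for $\{\psi_j\}$ does not secretly depend on it through $\kappa_n$; it does not, because $L_b, L_q, W_1, L_2$ are all independent of the $\kappa_n$, so I would emphasize that point explicitly. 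I would state the lemma's conclusions as an enumerated list mirroring properties (1)–(4) and discharge each with a one- or two-line composition argument, reserving the bulk of the writing for the dichotomy in (4).
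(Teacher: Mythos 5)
Your list of expected conclusions does not quite match the lemma, and this matters. Your (1) and (2) (uniform Lipschitz, analyticity of $\psi_j$ and $\widetilde\psi_j$) are established in the paper \emph{before} the lemma, not as part of it, and your (3) (the floor $\psi_j\geq 1/4$) is true but is not a conclusion of the lemma and is not used. The lemma's actual conclusions are: (i) $\psi_j\geq 1$ on $G\setminus C_j^3$; (ii) for each $x\in G$ there is $j_0$ with $\psi_{j_0}(x)<1/2$; and (iii) for each $x\in G$ there exist $j_x$ and $\delta_x>0$ so that for $j>j_x$ and $\|z\|_{\widetilde X}<\delta_x$ one has $\bigl|\widetilde\psi_j(x+z)-\widetilde\psi_j(x)\bigr|<1/(10L_h)$ and moreover $\psi_j(x)>1$. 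You do capture (i) (via $\zeta^1(q(x-x_j))\geq 1$ when $q(x-x_j)\geq\gamma_3$), and your (ii) argument, while momentarily derailed (you \emph{do} want an upper bound on $\psi_{j_0}$, not a lower one), is rescued in your final paragraph: the right $j_0$ is the smallest index with $x\in C_{j_0}^2$, so that simultaneously $q(x-x_{j_0})<\gamma_2$ gives $f_{j_0}(x)<1/4$ and Lemma~3(ii) gives $\varphi_{j_0-1}(x)>1/2$, whence $g_{j_0}(x)<1/4$.

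The genuine gap is (iii). You describe only the real-domain bound ``$\psi_j\geq 2$ near $x_0$ for $j$ large,'' which gives the ``moreover'' clause $\psi_j(x)>1$ but not the complex oscillation estimate $\bigl|\widetilde\psi_j(x+z)-\widetilde\psi_j(x)\bigr|<1/(10L_h)$. That estimate is the load-bearing part: downstream it yields $|\widetilde u_j(x+z)|<1/5$ for $j$ large and $\|z\|_{\widetilde X}<\delta_x$, which is exactly what makes $\sum_j\widetilde u_j(\cdot)^{2j}$ converge locally uniformly on a \emph{complex} neighbourhood of $G$, and that local uniform convergence of complex analytic maps is the entire source of the analyticity of $A$ and hence of $K$. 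The real-domain lower bound on $\psi_j$ alone cannot do this. To prove the oscillation estimate you must control both summands of $\widetilde\psi_j=\widetilde f_j+\widetilde g_j$: for $\widetilde g_j$, apply Lemma~3(iii) with $\eta=1/(40L_2L_h)$ to get $|\widetilde\varphi_{j-1}(x+z)|<\eta$ and $|\widetilde\varphi_{j-1}(x)|<\eta$ for $j>j_x$, hence $|\widetilde g_j(x+z)-\widetilde g_j(x)|\leq L_2\cdot 2\eta\leq 1/(20L_h)$; for $\widetilde f_j$, use that $\{\widetilde f_j\}$ is uniformly Lipschitz on $\widetilde G$ with constant independent of $j$, and shrink $\delta_x$ (independently of $j$) to force $|\widetilde f_j(x+z)-\widetilde f_j(x)|<1/(20L_h)$. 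Your sketch addresses neither.
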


\begin{enumerate}
\item[(i)] $\psi_{j}(x)\geq1$\textit{\ for all }$x\in G\backslash C_{j}^{3}
$\textit{.}

\medskip

\item[(ii)] \textit{For each }$x\in G$\textit{, there is a }$j_{0}%
$\textit{\ such that }$\psi_{j_{0}}(x)<1/2$\textit{.}

\medskip

\item[(iii)] \textit{For }$x\in G$\textit{, }$z\in\widetilde{X},$
\textit{there is a }$j_{x}$\textit{\ and a }$\delta=\delta_{x}>0$%
\textit{\ such that }%
\[
\left\vert \widetilde{\psi}_{j}\left(  x+z\right)  -\widetilde{\psi}%
_{j}\left(  x\right)  \right\vert <1/(10L_{h})
\]
\textit{\ for }$\left\Vert z\right\Vert _{\widetilde{X}}<\delta$\textit{\ and
}$j>j_{x}$\textit{. Moreover, for }$j>j_{x}$ we have $\psi_{j}\left(
x\right)  >1$.

\medskip
\end{enumerate}

\begin{proof}
(i) For any $x\in G$%
\[
\psi_{j}(x)=f_{j}\left(  x\right)  +g_{j}\left(  x\right)  \geq f_{j}\left(
x\right)  =\zeta_{j}^{1}(q(x-x_{j}))\text{.}%
\]
\noindent But if $x\in G\backslash C_{j}^{3}$, $q(x-x_{j})\geq\gamma_{3}$,
implying $\zeta^{1}(q(x-x_{j}))\geq1$.

\medskip

(ii) Fix $x\in G$. There is a $j_{0}$ with $x\in C_{j_{0}}^{2}$ but with $x\in
G\backslash C_{i}^{2}$ for $1\leq i<j_{0}$. From the construction of the
$f_{j}$ and $g_{j}$ and using Lemma 3 (ii),\ $f_{j_{0}}(x)<1/4$ and $g_{j_{0}%
}(x)<1/4$.

\medskip

(iii) Fix $x\in G,$ and choose $\delta_{1}$ satisfying $1\geq\delta_{1}>0$ so
that $\left\Vert z\right\Vert _{\widetilde{X}}<\delta_{1}$ implies
$x+z\in\widetilde{G}.$ Since $\{x_{j}\}$ is dense$,$ there is a $j_{x}%
^{^{\prime}}>1$ such that $x\in C_{j_{x}^{^{\prime}}}^{1}$. Now from Lemma 3
(iii) with $\eta=1/40L_{2}L_{h},$ we have that for $x\in C_{j_{x}^{^{\prime}}%
}^{1}$ there exists $\delta$ satisfying $0<\delta<\delta_{1}$ so that
$\left\Vert z\right\Vert _{\widetilde{X}}<\delta$ and $j>j_{x}^{^{\prime}}$
imply $\left\vert \widetilde{\varphi}_{j}\left(  x+z\right)  \right\vert
<\eta.$ Thus we have, for all $\left\Vert z\right\Vert _{\widetilde{X}}%
<\delta$ and all $j>j_{x}^{^{\prime}}$
\begin{align*}
|\widetilde{\varphi}_{j}\left(  x+z\right)  -\widetilde{\varphi}_{j}\left(
x\right)  |  &  \leq|\widetilde{\varphi}_{j}\left(  x+z\right)  |+|\widetilde
{\varphi}_{j}\left(  x\right)  |\\
& \\
&  <2\eta\leq1/\left(  20L_{2}L_{h}\right)  <1,
\end{align*}
and hence if $j>j_{x}\equiv j_{x}^{\prime}+1$
\begin{align*}
|\widetilde{g}_{j}\left(  x+z\right)  -\widetilde{g}_{j}\left(  x\right)  |
&  =|\zeta^{2}(\widetilde{\varphi}_{j-1}\left(  x+z\right)  )-\zeta
^{2}(\widetilde{\varphi}_{j-1}\left(  x\right)  )|\\
& \\
&  \leq L_{2}|\widetilde{\varphi}_{j-1}\left(  x+z\right)  -\widetilde
{\varphi}_{j-1}\left(  x\right)  |\\
& \\
&  \leq L_{2}\cdot1/\left(  20L_{2}L_{h}\right)  \leq1/\left(  20L_{h}\right)
.
\end{align*}
Now $f_{j}\left(  x\right)  \geq0$ and $\widetilde{f}_{j}$ is Lipschitz on $G$
with constant independent of $j,$ and so choosing $\delta>0$ again smaller if
necessary, we have $\left\vert \widetilde{f}_{j}(x+z)-\widetilde{f}_{j}\left(
x\right)  \right\vert <1/(20L_{h})$ for all $j$ and $\left\Vert z\right\Vert
_{\widetilde{X}}<\delta.$ We therefore have, for all $\left\Vert z\right\Vert
_{\widetilde{X}}<\delta$ and all $j>j_{x}$%
\begin{align}
&  \left\vert \widetilde{\psi}_{j}\left(  x+z\right)  -\widetilde{\psi}%
_{j}\left(  x\right)  \right\vert \nonumber\\
& \nonumber\\
&  =\left\vert \widetilde{f}_{j}\left(  x+z\right)  -\widetilde{f}_{j}\left(
x\right)  +\widetilde{g}_{j}\left(  x+z\right)  -\widetilde{g}_{j}\left(
x\right)  \right\vert \\
& \nonumber\\
&  \leq\left\vert \widetilde{f}_{j}\left(  x+z\right)  -\widetilde{f}%
_{j}\left(  x\right)  \right\vert +\left\vert \widetilde{g}_{j}\left(
x+z\right)  -\widetilde{g}_{j}\left(  x\right)  \right\vert \nonumber\\
& \nonumber\\
&  \leq1/(20L_{h})+1/(20L_{h})=1/\left(  10L_{h}\right)  \text{.}\nonumber
\end{align}
Finally, for $j>j_{x}$, $g_{j}(x)\geq2$ since $\varphi_{j-1}\left(  x\right)
=\left\vert \varphi_{j-1}\left(  x\right)  \right\vert <\eta\leq1/4$. And
since $f_{j}(x)\geq0$, $\psi_{j}\left(  x\right)  >1$.
\end{proof}

\subsection{Main Theorem}

We finally present the proof of Theorem 1, using the previous constructions,
results, and notation.

\medskip

\begin{proof}
[Proof of Theorem 1]We continue to use the notation introduced in the previous
lemmas. Define the real analytic map
\[
u_{j}\left(  x\right)  =h\left(  \psi_{j}\left(  x\right)  \right)  ,
\]
noting that on $G$, the collection $\left\{  u_{j}\right\}  $ is UL$.$ As
usual we write $\widetilde{u}_{j}\left(  z\right)  =h\left(  \widetilde{\psi
}_{j}\left(  z\right)  \right)  .$ Applying Lemma \ref{Mainlemma} (iii)$,$ we
obtain the following. For each $x\in G$ there exists $\delta=\delta\left(
x\right)  >0$ and $j_{x}>1$ so that for $z\in\widetilde{X}$ with $\left\Vert
z\right\Vert _{\widetilde{X}}<\delta$ we have for all $j\geq j_{x}$ that
$\left\vert \widetilde{u}_{j}\left(  x+z\right)  \right\vert <1/5$. Indeed,
using $\left(  3.3\right)  $%
\begin{align*}
\left\vert \widetilde{u}_{j}\left(  x+z\right)  -\widetilde{u}_{j}\left(
x\right)  \right\vert  &  =\left\vert h(\widetilde{\psi}_{j}\left(
x+z\right)  )-h(\widetilde{\psi}_{j}\left(  x\right)  )\right\vert \\
& \\
&  \leq L_{h}\left\vert \widetilde{\psi}_{j}\left(  x+z\right)  )-\widetilde
{\psi}_{j}\left(  x\right)  \right\vert \\
& \\
&  \leq L_{h}\cdot1/\left(  10L_{h}\right)  =1/10
\end{align*}

\medskip

But $\psi_{j}\left(  x\right)  =$ $\widetilde{\psi}_{j}\left(  x\right)  >1$
implying that $\left\vert \widetilde{u}_{j}\left(  x\right)  \right\vert
=h(\psi_{j}\left(  x\right)  )<1/10$. We conclude that%

\[
\left\vert \widetilde{u}_{j}\left(  x+z\right)  \right\vert \leq\left\vert
\widetilde{u}_{j}\left(  x\right)  \right\vert +1/10<1/10+1/10=1/5\text{.}%
\]

\medskip

A similar calculation with Lemma \ref{Mainlemma} (i) gives that $x\in
G\backslash C_{j}^{3}$ implies $u_{j}\left(  x\right)  <\varepsilon/4.$ Also,
by Lemma \ref{Mainlemma} (ii) we note that for each $x$ there is a $j_{0}$
with $\psi_{j_{0}}\left(  x\right)  <1/2,$ and hence $u_{j_{0}}\left(
x\right)  \geq4/5.$ Using the notation from section 2.1, the above shows that
for each $x\in G,$ $\left\{  u_{j}\left(  x\right)  \right\}  \in U,$ and
moreover, for any $x,y\in G$ and $a,b\in\mathbb{R}$ with $\left\vert
a\right\vert \leq15/8$ and $\left\vert b\right\vert \leq15/8$ we have%
\begin{align}
\{au_{j}(x)\}  &  \in U\\
& \nonumber\\
\{au_{j}(x)\}+\{bu_{j}(x)\}  &  \in U\nonumber
\end{align}
It follows from this and the subadditivity of $\lambda$ that for such $x,y$
and $a,b$ we have the 1-Lipschitz property
\begin{equation}
\left\vert \lambda\left(  \left\{  au_{j}\left(  x\right)  \right\}  \right)
-\lambda\left(  \left\{  bu_{j}\left(  y\right)  \right\}  \right)
\right\vert \leq\lambda\left(  \left\{  au_{j}\left(  x\right)  \right\}
-\left(  \left\{  bu_{j}\left(  y\right)  \right\}  \right)  \right)  \text{.}%
\end{equation}
Now put
\[
A\left(  x\right)  =\sum_{j=1}^{\infty}u_{j}\left(  x\right)  ^{2j}.
\]
We next show that $A$ is analytic on $G.$ From the calculation above, we have
that $\widetilde{A}\left(  x+z\right)  =\sum_{j=1}^{\infty}\widetilde{u}%
_{j}\left(  x+z\right)  ^{2j}$ converges locally uniformly at $x$. Since the
$\widetilde{u}_{j}$ are complex analytic, we conclude that $\widetilde{A}$ is
complex analytic in a neighbourhood of $x$ in $\widetilde{X}.$ Hence, since
$x\in G$ was arbitrary, in some neighbourhood $\widetilde{U}$ with
$G\subset\widetilde{U}\subset\widetilde{X}$ we have that $\widetilde{A}$ is
complex analytic and so $A\left(  x\right)  $ is real analytic on $G.$
Similarly from the calculations above, it follows that for each $x\in G$ the
map $H\left(  z,\mu\right)  =\sum_{j=1}^{\infty}\left(  \mu^{-1}\widetilde
{u}_{j}\left(  z\right)  \right)  ^{2j},$ is complex analytic on some
neighbourhood $\widetilde{N}_{x}=\widetilde{U}_{x}\times V,$ where
$\widetilde{U}_{x}\subset\widetilde{X}$ contains $x$ and $V=\left\{
w\in\mathbb{C}:\left\vert w\right\vert >7/30\right\}  $ (the lower bound
$7/30$ will be necessary later when we consider the sequence $\left\{
F\left(  x_{j}\right)  u_{j}\left(  x\right)  \right\}  $).

\medskip

Next, for fixed $x\in G,$ consider the map $Q:(8/15,\infty)\rightarrow U$
defined by $Q(\mu)=\sum_{j=1}^{\infty}\left(  \mu^{-1}u_{j}\left(  x\right)
\right)  ^{2j}.$ This map is continuous (see the argument above establishing
the analyticity of $H$). Also, for $\mu>\sqrt{2},$ $u_{j}\left(  x\right)
/\mu\leq1/\mu<1/\sqrt{2},$ and so $\sum_{j}\left(  u_{j}\left(  x\right)
/\mu\right)  ^{2j}\leq\sum_{j}\left(  \frac{1}{2}\right)  ^{j}<1.$ On the
other hand, as noted, there exists $j_{0}$ with $u_{j_{0}}\left(  x\right)
\geq4/5,$ and so if $\mu<5/4$ then $\sum_{j}\left(  u_{j}\left(  x\right)
/\mu\right)  ^{2j}\geq\left(  u_{j_{0}}\left(  x\right)  /\mu\right)
^{2j_{0}}>1.$ Hence, by the Intermediate Value Theorem, for each $x\in G$ the
equation $0=F\left(  x,\mu\right)  \equiv H\left(  x,\mu\right)  -1$ has a (in
this case unique) solution $\mu=\mu(x)$.

\medskip

Next note that $h$ is strictly bounded above $0,$ and so for each $x\in G$ it
follows from continuity that $\frac{\partial H\left(  z,\mu\right)  }%
{\partial\mu}\neq0$ in some neighbourhood $\widetilde{N}_{x}^{\prime
}=\widetilde{U}_{x}^{\prime}\times V^{\prime},$ where $\widetilde{U}%
_{x}^{\prime}\subset\widetilde{U}_{x}$ contains $x$ and $V^{\prime}\subset V$
contains $\mu(x).$ From the above considerations, and an argument analogous to
the one in section 2.1, it now follows by the complex Implicit Function
Theorem that on some neighbourhood $\widetilde{W}$ of $\left(  x,\mu
(x)\right)  $ in $\widetilde{X}\times\mathbb{C}$ there is a unique solution
$\mu\left(  z\right)  $ to the equation $F\left(  z,\mu\right)  =0$ which is
complex analytic on $\widetilde{W}$. In particular, its restriction to
$W=\widetilde{W}\cap\left(  G\times\mathbb{R}\right)  $ is real analytic.
Noting that $\left\{  u_{j}\left(  x\right)  \right\}  \in U,$ an examination
of the construction in section 2.1 of the extension of the Preiss norm to an
analytic function $\lambda$ on $U\subset l_{\infty},$ shows from the
uniqueness conclusion of the Implicit Function Theorem that $\mu\left(
x\right)  =\lambda\left(  \left\{  u_{j}\left(  x\right)  \right\}
_{j}\right)  $.

\medskip

Thus, the map $x\rightarrow\lambda\left(  \left\{  u_{j}\left(  x\right)
\right\}  \right)  $ is analytic on $G$. As well, from the previous estimates
on $\lambda$ it follows that
\begin{equation}
\frac{1}{2}\lambda\left(  \left\{  au_{j}\left(  x\right)  \right\}  \right)
\leq\left\Vert \left\{  au_{j}\left(  x\right)  \right\}  \right\Vert
_{\infty}\leq\lambda\left(  \left\{  au_{j}\left(  x\right)  \right\}
\right)  \text{,}%
\end{equation}
and these inequalities hold for any $a\in\mathbb{R}$ with $\left\{
au_{i}\left(  x\right)  \right\}  \in U.$ Finally define $K:G\rightarrow
\mathbb{R}$ by
\[
K(x)=\frac{\lambda\left(  \left\{  F\left(  x_{j}\right)  u_{j}\left(
x\right)  \right\}  \right)  }{\lambda\left(  \left\{  u_{j}\left(  x\right)
\right\}  \right)  }.
\]
for all $x\in G$. That the numerator of $K$ is (real) analytic follows the
same argument used to establish that the map $x\rightarrow\lambda\left(
\left\{  u_{j}\left(  x\right)  \right\}  \right)  $ is analytic, where now
one also uses the bounds $1\geq F\left(  x_{j}\right)  \geq1/3.$ Recall again
that for each $x$ there is a $j_{0}$ such that $u_{j_{0}}(x)\geq4/5.$ So,
using $(3.6)$ we obtain
\[
\lambda\left(  \{u_{j}\left(  x\right)  \}\right)  \geq\left\Vert
\{u_{j}\left(  x\right)  \}\right\Vert _{\infty}\geq u_{j_{0}}\left(
x\right)  \geq\frac{4}{5}\text{.}%
\]
Now the functions $F(x_{j})u_{j}\left(  x\right)  $ are Lipschitz with
constant independent of $j,$ since the collection $\left\{  u_{j}\right\}  $
is UL and $F$ is bounded. Also, by $\left(  3.5\right)  ,$ using $0<F\leq1,$
$\lambda$ is 1-Lipschitz on points of the form $\left\{  F\left(
x_{j}\right)  u_{j}\left(  x\right)  \right\}  _{j},$ and so the composition
$\lambda\left(  \left\{  F\left(  x_{j}\right)  u_{j}\left(  x\right)
\right\}  \right)  $ is Lipschitz$.$ These observations, combined with
$\lambda\left(  \{u_{j}\left(  x\right)  \}\right)  \geq4/5$ show that $K$ is
analytic and Lipschitz. Then using $\left(  3.4\right)  $ and $\left(
3.5\right)  $ we obtain the estimate%
\begin{align*}
\left\vert K(x)-F(x)\right\vert  &  =\left\vert \frac{\lambda\left(  \left\{
F\left(  x_{j}\right)  u_{j}\left(  x\right)  \right\}  \right)  }%
{\lambda\left(  \left\{  u_{j}\left(  x\right)  \right\}  \right)  }-F\left(
x\right)  \right\vert \\
& \\
&  =\left\vert \frac{\lambda\left(  \left\{  F\left(  x_{j}\right)
u_{j}\left(  x\right)  \right\}  \right)  }{\lambda\left(  \left\{
u_{j}\left(  x\right)  \right\}  \right)  }-\frac{F\left(  x\right)
\lambda\left(  \left\{  u_{j}\left(  x\right)  \right\}  \right)  }%
{\lambda\left(  \left\{  u_{j}\left(  x\right)  \right\}  \right)
}\right\vert \\
& \\
&  =\frac{1}{\lambda\left(  \left\{  u_{j}\left(  x\right)  \right\}  \right)
}|\lambda\left(  \left\{  F\left(  x_{j}\right)  u_{j}\left(  x\right)
\right\}  \right)  -F\left(  x\right)  \lambda\left(  \left\{  u_{j}\left(
x\right)  \right\}  \right)  |\\
& \\
&  =\frac{1}{\lambda\left(  \left\{  u_{j}\left(  x\right)  \right\}  \right)
}|\lambda\left(  \left\{  F\left(  x_{j}\right)  u_{j}\left(  x\right)
\right\}  \right)  -\lambda\left(  \left\{  F\left(  x\right)  u_{j}\left(
x\right)  \right\}  \right)  |\\
& \\
&  \leq\frac{1}{\lambda\left(  \left\{  u_{j}\left(  x\right)  \right\}
\right)  }\lambda\left(  \left\{  F(x_{j})u_{j}\left(  x\right)  \right\}
-\{F\left(  x\right)  u_{j}\left(  x\right)  \}\right) \\
& \\
&  =\frac{1}{\lambda\left(  \left\{  u_{j}\left(  x\right)  \right\}  \right)
}\lambda\left(  \{u_{j}\left(  x\right)  \left(  F\left(  x_{j}\right)
-F\left(  x\right)  \right)  \}\right)  \text{.}%
\end{align*}
Now by $\left(  3.6\right)  $ and the comment following%
\begin{align*}
\lambda\left(  \{u_{j}\left(  x\right)  \left(  F\left(  x_{j}\right)
-F\left(  x\right)  \right)  \}\right)   &  \leq2\left\Vert \{u_{j}\left(
x\right)  \left(  F\left(  x_{j}\right)  -F\left(  x\right)  \right)
\}\right\Vert _{\infty}\\
& \\
&  =2\max_{j}\left\{  u_{j}\left(  x\right)  \left\vert F\left(  x_{j}\right)
-F\left(  x\right)  \right\vert \right\}  \text{.}%
\end{align*}
Set $J=\{j:x\in C_{j}^{3}\}$. For $j\in J$ we have $\left\Vert x-x_{j}%
\right\Vert _{X}<\delta$ and so%
\[
u_{j}\left(  x\right)  \left\vert F\left(  x_{j}\right)  -F\left(  x\right)
\right\vert <u_{j}\left(  x\right)  \varepsilon.
\]
It follows that for $j\in J$%
\[
\frac{u_{j}\left(  x\right)  \left\vert F\left(  x_{j}\right)  -F\left(
x\right)  \right\vert }{\lambda\left(  \left\{  u_{j}\left(  x\right)
\right\}  \right)  }<\frac{u_{j}\left(  x\right)  \frac{\varepsilon}{2}%
}{\left\Vert \left\{  u_{j}\left(  x\right)  \right\}  \right\Vert _{\infty}%
}\leq\varepsilon.
\]
On the other hand, for $j\notin J$ we have by part (i) of Lemma 3, as noted
above, that%
\[
u_{j}\left(  x\right)  \left\vert F\left(  x_{j}\right)  -F\left(  x\right)
\right\vert \leq2u_{j}\left(  x\right)  <2\cdot\frac{\varepsilon}%
{4}=\varepsilon/2\text{.}%
\]
Hence, given that $\lambda\left(  \left\{  u_{j}\left(  x\right)  \right\}
\right)  \geq4/5$, we have for $j\notin J$%
\[
\frac{u_{j}\left(  x\right)  \left\vert F\left(  x_{j}\right)  -F\left(
x\right)  \right\vert }{\lambda\left(  \left\{  u_{j}\left(  x\right)
\right\}  \right)  }\leq\left(  \frac{5}{4}\right)  \left(  \frac{\varepsilon
}{2}\right)  <\varepsilon\text{.}%
\]
It follows that
\[
\left\vert K(x)-F(x)\right\vert <\varepsilon\text{.}%
\]

\end{proof}

\medskip

\end{document}